\newtheorem{dfn}{Definition}[section]
\newtheorem{thm}[dfn]{Theorem}
\newtheorem{prop}[dfn]{Proposition}
\newtheorem{lem}[dfn]{Lemma}
\newtheorem{cor}[dfn]{Corollary}
\newtheorem{rem}[dfn]{Remark}
\newtheorem{conj}[dfn]{Conjecture}
\newtheorem{cond}[dfn]{Condition}
\def\Q{\mathbb{Q}}
\def\R{\mathbb{R}}
\def\Z{\mathbb{Z}}
\def\C{\mathbb{C}}
\def\F{\mathbb{F}}
\def\Br{\mathop{\mathrm{Br}}\nolimits}
\def\Pic{\mathop{\mathrm{Pic}}\nolimits}
\def\M2dh{\mathop{M^{(h)}_{2d}}\nolimits}
\def\M2d{\mathop{M_{2d}}\nolimits}
\def\g1{\mathop{\gamma_1}\nolimits}
\def\g2{\mathop{\gamma_2}\nolimits}
\def\O{\mathop{\mathscr{O}}\nolimits}
\def\disc{{\rm{disc}}}
\title[$K3$ surfaces with $L$-function]{Unconditional construction of $K3$ surfaces over finite fields with given $L$-function in large characteristic}
\address{Department of Mathematics, Faculty of Science, Kyoto University, Kyoto 606-8502, Japan}
\email{kito@math.kyoto-u.ac.jp}
\date{October 8, 2017}
\subjclass[2010]{Primary 14J28 ; Secondary 11G25, 14G10, 14G15, 14K22}
\keywords{$K3$ surface, Hasse-Weil zeta function, Complex multiplication, Good reduction}
\author{Kazuhiro Ito}
\begin{document}
\maketitle
\begin{abstract}
We give an unconditional construction of $K3$ surfaces over finite fields with given $L$-function, up to finite extensions of the base fields, under some mild restrictions on the characteristic. Previously, such results were obtained by Taelman assuming semistable reduction. The main contribution of this paper is to make Taelman's proof unconditional. We use some results of Nikulin and Bayer-Fluckiger to construct an appropriate complex projective $K3$ surface with $CM$ which admits an elliptic fibration with a section, or an ample line bundle of low degree. Then using Saito's construction of strictly semistable models and applying a slight refinement of Matsumoto's good reduction criterion for $K3$ surfaces, we obtain a desired $K3$ surface over a finite field.     
\end{abstract}
\section{Introduction}\label{intro}
In this paper, we shall give an unconditional construction of $K3$ surfaces over finite fields with given $L$-function, up to finite extensions of the base fields. Previously, Taelman conditionally proved the existence of such $K3$ surfaces assuming a strong version of the existence of semistable reduction for $K3$ surfaces \cite[Theorem 3]{Taelman}. The main contribution of this paper is to make Taelman's proof unconditional.

Fix a prime number $p$ and an integer $m$ with $1\leq{m}\leq10$. Let $q$ be a power of $p$. We fix an embedding $\overline{\Q} \hookrightarrow \overline{\Q}_p$, and let $\nu_{q}\colon \overline{\Q}_p\to \Q\cup\lbrace\infty\rbrace$ be the $p$-adic valuation normalized by ${\nu_q}(q)=1$.
 For a polynomial 
$$Q(T)=\prod_{j}(1-{{\beta}_{j}}T) \in \Q[T]\qquad (\beta_{j}\in\overline{\Q}),$$
we put $$Q_{<0}(T)=\prod_{\nu_{q}({\beta}_{j})<0}(1-{{\beta}_{j}}T) \in \Q_{p}[T].$$

 Consider a polynomial $$L(T) \in{1+T\Q[T]}$$ of degree $2m$ satisfying the following conditions.
\begin{cond}\label{cond}
\begin{itemize}
\item All complex roots of $L(T)$ have absolute value one,
\item no root of $L(T)$ is a root of unity,
\item $L(T) \in \Z_{\ell}[T]$ for all prime numbers ${\ell}\neq{p}$,
\item there exists a positive integer $h\in \Z$ with $1\leq{h}\leq{m}$ such that, if we denote the roots of $L(T)$ by $\alpha_1, \dotsc, \alpha_{2m}\in\overline{\Q}$, then, after permuting them, they satisfy
$$
\begin{cases}
\nu_{q}({\alpha}_i)=-1/h &\quad \  ({1}\leq{i}\leq{h})\\
\nu_{q}({\alpha}_i)=0  &\quad  \  ({h+1}\leq{i}\leq{2m-h})\\
\nu_{q}({\alpha}_i)=1/h  &\quad \ ({2m-h+1}\leq{i}\leq{2m})\\
\end{cases}
$$
for a power $q$ of $p$, 
\item $L(T)=Q(T)^e$ for some $e\geq1$ and some irreducible polynomial ${Q(T)} \in {\Q[T]}$, and
\item $Q_{<0}(T)$ is an irreducible polynomial in $\Q_{p}[T].$
\end{itemize}
\end{cond}
Recall that a \textit{$K3$ surface} $X$ over a field is a projective smooth surface with trivial canonical bundle satisfying $H^1(X, \mathscr{O}_X)=0$. It is well-known that if we write the $L$-function of a non-supersingular $K3$ surface $X$ over a finite field $\F_q$ in the following form
$$L(X/{\F_q}, T):={\rm{det}}(1-T{\mathrm{Frob}}_{\F_q}; H^2_{\rm{\acute{e}t}}(X_{\overline{\F}_q}, \Q_{\ell}{(1)}))=\prod^{22}_{i=1}(1-{\gamma_i}T),$$
then the polynomial
$$L_{\mathrm{{trc}}}(X/{\F_q}, T):=\prod_{{\gamma_i} \notin \mu_{\infty}}(1-{\gamma_i}T)$$
satisfies Condition \ref{cond}; see \cite[Theorem 1]{Taelman}. Here, ${\mathrm{Frob}}_{\F_q}\in{\rm{Gal}}({\overline{\F}_q}/{\F_q})$ is the geometric Frobenius element, and $\mu_{\infty}$ is the set of roots of unity. We call $L_{\mathrm{{trc}}}(X/{\F_q}, T)$ the \textit{transcendental part} of $L(X/{\F_q}, T)$.

In this paper, we shall prove the following conjecture due to Taelman under some mild restrictions on the characteristic.

 \begin{conj}\label{Taelman-conjecture}
Let $p$ be a prime number and $m$ an integer with $1 \leq m \leq 10$.
For a polynomial
$$L(T)=\prod^{2m}_{i=1}(1-{\gamma_i}T)\in{1+T\Q[T]}$$
satisfying Condition \ref{cond} for a power $q$ of $p$, there exist a positive integer $n\geq1$ and a $K3$ surface $X$ over $\F_{q^n}$ such that 
$$L_{\mathrm{{trc}}}(X/{\F_{q^n}}, T)=\prod^{2m}_{i=1}(1-{\gamma^n_i}T).$$ 
\end{conj}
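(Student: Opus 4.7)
The overall strategy, following Taelman's conditional approach, is to realize $L(T)$, after passing to a finite extension of $\F_q$, as the transcendental Frobenius polynomial of a complex projective $K3$ surface with complex multiplication which descends to a number field and has good reduction at an appropriate prime above $p$. Since the roots $\gamma_i$ have absolute value one, are not roots of unity, and $Q(T)$ is irreducible over $\Q$, the algebra $E := \Q[T]/(Q(T))$ is a CM field of degree $2m/e$; the irreducibility of $Q_{<0}(T)$ over $\Q_p$ together with the slope pattern $-1/h, 0, 1/h$ pins down a compatible CM type $\Phi$ on $E$ via a Shimura--Taniyama type argument. The goal is then to produce a projective $K3$ surface whose transcendental Hodge structure realises $(E,\Phi)$ and whose reduction at a suitable prime has Frobenius eigenvalues $\gamma_i^n$.

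To construct the complex $K3$ surface I would combine Nikulin's theorems on primitive lattice embeddings with Bayer--Fluckiger's results on bilinear forms over CM fields. First, build an even lattice $T$ of signature $(2,2m-2)$ carrying a faithful $\mathcal{O}_E$-action and a weight-two Hodge structure of CM type $\Phi$. Second, embed $T$ primitively into the $K3$ lattice $\Lambda_{K3}$ so that the orthogonal complement $N := T^{\perp}$ either contains a hyperbolic plane $U$ (producing an elliptic fibration with a section on the resulting $K3$) or admits an ample class of small self-intersection $2d$. Since $m \leq 10$, we have $\mathrm{rank}(N) \geq 2$, leaving enough room to arrange one of these two options. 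The surjectivity of the period map then yields a projective complex $K3$ surface $X_{\C}$ with the prescribed transcendental lattice, endomorphism algebra, and geometric structure; by standard descent arguments for CM motives, $X_{\C}$ admits a model $X_K$ over some number field $K$.

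The final step is to show $X_K$ acquires good reduction at a prime $v$ of $K$ above $p$, after a finite base change, with the correct Frobenius eigenvalues. The CM structure forces the $\ell$-adic Galois representation on $H^2_{\mathrm{\acute{e}t}}(X_{\overline{K}}, \Q_\ell)_{\mathrm{trc}}$ to come from a Hecke character of $E$; a direct computation then shows this representation is unramified at $v$ for $\ell \neq p$ and crystalline at $v$ for $\ell = p$ after a finite base change. This is the input to Matsumoto's good reduction criterion, which guarantees potential good reduction of $X_K$ at $v$ provided $X_{K_v}$ admits a strictly semistable model. The geometric datum from the previous step is crucial here: Saito's construction produces strictly semistable models precisely for $K3$ surfaces carrying an elliptic fibration with a section or an ample line bundle of small degree, which is the key point that makes Taelman's semistable reduction hypothesis unnecessary. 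A refinement of Matsumoto's criterion (to be proved earlier in the paper) then yields a $K3$ surface $X_0$ over $\F_{q^n}$ for some $n \geq 1$, and a final Shimura--Taniyama computation identifies $L_{\mathrm{trc}}(X_0/\F_{q^n}, T) = \prod_{i=1}^{2m}(1 - \gamma_i^n T)$.

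The main obstacle is the simultaneous compatibility of the lattice-theoretic, Hodge-theoretic, and $p$-adic conditions in the construction of $X_{\C}$: the lattice $T$ must carry both an $\mathcal{O}_E$-action and an embedding into $\Lambda_{K3}$ whose complement splits off either $U$ or a small-degree polarization, while the CM type $\Phi$ must be compatible with the prescribed Newton slope pattern at $p$. Nikulin's embedding theorems give enough flexibility only when the rank and discriminant constraints align, and the hypothesis $1 \leq m \leq 10$ is precisely what makes these constraints simultaneously satisfiable. Once $X_{\C}$ has been produced, the remaining arithmetic steps -- descent, potential good reduction via Saito and Matsumoto, and matching of Frobenius -- proceed along established CM lines.
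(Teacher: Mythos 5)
Your outline reproduces the strategy of this paper (CM $K3$ via Nikulin and Bayer--Fluckiger, arranged to carry an elliptic fibration with a section or a degree-$2$ polarization; Saito's strictly semistable models feeding a refinement of Matsumoto's criterion; Rizov's main theorem of CM to match Frobenius eigenvalues). But the statement you are proving is Conjecture \ref{Taelman-conjecture} for \emph{all} primes $p$, and this approach does not deliver that: the paper itself only obtains Theorem \ref{uncondition}, i.e.\ the conjecture for $p\geq 7$ and for $p=5$ under extra hypotheses, and Remark \ref{can not weaken} explains why the method stops there. Concretely, every route to good reduction here passes through Saito's construction of strictly semistable models and Kawamata's minimal model results in mixed characteristic, both of which require $p\geq 5$; so $p\leq 3$ is simply out of reach of this argument, and your proposal never addresses it.

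The second gap is in the sentence ``Since $m\leq 10$, we have $\mathrm{rank}(N)\geq 2$, leaving enough room to arrange one of these two options.'' When $m=10$ the complement $N$ has rank exactly $2$, and its determinant modulo squares is forced by $\disc(F)$ through $\det(\Lambda_{K3,\Q})=-1$; Proposition \ref{degree20} shows that if $\disc(F)$ is not a square then \emph{no} $K3$ surface with CM by $F$ admits an elliptic fibration with a section, so the hyperbolic-plane option is genuinely obstructed, not merely harder to arrange. One must then fall back on an ample line bundle $\L$ with $(\L)^2=2$, and Matsumoto's criterion in that case needs $p>(\L)^2+4=6$. Combined with Lemma \ref{disc} (which shows $\disc(F)$ inherits non-squareness from $\disc(E)$ when $e$ is odd), this is exactly why the case $p=5$, $m=10$, $\disc(\Q(\gamma_1))$ not a square, $e$ odd is excluded from Theorem \ref{uncondition}. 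A smaller point: the transcendental lattice of a CM $K3$ must be one-dimensional over its endomorphism field, so you need to enlarge $E=\Q[T]/(Q(T))$ (of degree $2m/e$) to a CM field $F$ of degree $2m$ with a unique place above $v$ before applying Bayer--Fluckiger; a lattice with a faithful $\mathscr{O}_E$-action of $E$-dimension $e>1$ would not give a $K3$ surface with CM in the required sense.
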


The main result of this paper is as follows.

\begin{thm}\label{uncondition}
Let $p$ be a prime number and $m$ an integer with $1 \leq m \leq 10$.
Let
$$L(T)=\prod^{2m}_{i=1}(1-{\gamma_i}T)\in{1+T\Q[T]}$$
be a polynomial satisfying Condition \ref{cond} for a power $q$ of $p$.
\begin{enumerate}
\item When $p\geq7$, the assertion of Conjecture \ref{Taelman-conjecture} is true.
\item When $p = 5$, the assertion of Conjecture \ref{Taelman-conjecture} is true if at least one of the following conditions holds:
\begin{itemize}
\item $1\leq{m}\leq9$.
\item The discriminant $\disc(\Q(\gamma_1))$ of the number field $\Q(\gamma_1)$ is a square.
\item The integer $e$ of Condition \ref{cond} is even.
\end{itemize}
 \end{enumerate}
\end{thm}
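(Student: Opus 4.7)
The plan is to realize $L(T)$ as the transcendental $L$-polynomial of the smooth reduction of a complex projective $K3$ surface with complex multiplication. This breaks the problem into a lattice-theoretic complex construction, a descent to a number field, and the invocation of a good reduction criterion for $K3$ surfaces.

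Step 1 (CM Hodge structure and transcendental lattice). Starting from $L(T) = Q(T)^e$ with $Q$ irreducible, I would build a polarized rational Hodge structure $V$ of $K3$ type, of dimension $2m$ and signature $(2, 2m-2)$, with complex multiplication by the CM field $E = \Q[T]/(Q(T))$, such that the Frobenius-like CM action has characteristic polynomial $L(T)$. Taelman already produced such a Hodge structure; the input needed here is an even integral lattice $T \subset V$ compatible with the CM action, which I would extract from Bayer-Fluckiger's results on the existence and classification of CM lattices in a fixed genus.

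Step 2 (Primitive embedding into the $K3$ lattice with auxiliary class). Using Nikulin's primitive-embedding theorem, I would embed $T$ into the $K3$ lattice $\Lambda_{K3} = U^{\oplus 3} \oplus E_8(-1)^{\oplus 2}$ primitively, arranging that the orthogonal complement $T^{\perp}$ contains either (a) an isotropic vector together with a $(-2)$-class meeting it transversely, producing an elliptic fibration with a section, or (b) a primitive class of small positive self-intersection $2d$, producing an ample class of low degree. Surjectivity of the period map and the global Torelli theorem then yield a complex $K3$ surface $X_{\C}$ whose transcendental lattice is $T$ and which carries the prescribed geometric structure. The CM action on $T$ forces $X_{\C}$ to be defined over a number field $F$.

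Step 3 (Strictly semistable model and good reduction). Fixing a prime $\mathfrak{p}$ of $F$ above $p$, Saito's construction produces, after a finite extension, a projective strictly semistable model of $X_{F_{\mathfrak{p}}}$ over $\O_{F_{\mathfrak{p}}}$, precisely because we arranged for an elliptic fibration with section or an ample class of low degree in Step 2. A slight refinement of Matsumoto's good reduction criterion, applied to this strictly semistable model under the CM hypothesis, upgrades potential semistable reduction to potential good reduction. Smooth-proper base change then identifies the Frobenius action on $H^2_{\textrm{\'et}}$ of the reduction with the CM-theoretic Galois representation built in Step 1, so that the transcendental eigenvalues over $\F_{q^n}$ are $\gamma_1^n, \dotsc, \gamma_{2m}^n$ for some $n \geq 1$.

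I expect the main obstacle to lie in Step 2: constructing a primitive embedding with the prescribed auxiliary class is controlled by Nikulin-type local-global conditions on $T$, and the admissible polarization degrees for which Saito's strictly semistable construction applies shrink as the characteristic decreases. This is exactly why $p \geq 7$ suffices unconditionally, while the borderline case $p = 5$, $m = 10$ only goes through under one of the fallback hypotheses (square discriminant of $\Q(\gamma_1)$, or even $e$), which give enough flexibility in the genus of $T$ to realize either an elliptic fibration with section or a polarization of sufficiently low degree.
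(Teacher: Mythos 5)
Your overall architecture (CM Hodge structure $\to$ complex $K3$ with an elliptic fibration with section or a low-degree polarization $\to$ Saito's strictly semistable model $\to$ refined Matsumoto criterion $\to$ Rizov's main theorem of CM to read off the $L$-function) is the paper's strategy. But Step 2 as you state it has a genuine gap. You propose to fix an \emph{integral} even lattice $T$ of rank $2m$ carrying the CM action, obtained ``from Bayer-Fluckiger's results on CM lattices in a fixed genus,'' and then primitively embed $T$ into $\Lambda_{K3}$ by Nikulin's theorem. Neither half of this works as stated: the Bayer-Fluckiger result actually available (Theorem \ref{CM-Bayer}) classifies \emph{rational} quadratic spaces of the form $(F,q_\lambda)$ by determinant, signature and the hyperbolicity condition --- it produces no distinguished even integral lattice in a prescribed genus --- and Nikulin's primitive-embedding criterion for a lattice of rank $2m$ into the rank-$22$ lattice $\Lambda_{K3}$ requires control of the length of the discriminant group (roughly $2m+\ell(A_T)\leq 22$), which fails for general $T$ when $m$ is large; this is precisely the hard case $m=10$. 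The paper reverses the roles: it writes down an explicit even lattice $N$ of rank $22-2m\leq 10$ (the future Picard lattice, already containing $U$ or a degree-$2$ class), embeds \emph{that} primitively by Nikulin --- where the rank bound $\leq 10$ makes the embedding automatic --- sets $T:=N^{\perp}$, and only then invokes Bayer-Fluckiger to show the rational space $T_{\Q}$ is isometric to $(F,q_\lambda)$. The verification of the hyperbolicity condition for these explicit $N$ (choosing auxiliary primes $p_1,p_2$ via Chebotarev and Dirichlet to adjust Hasse invariants) is the technical heart of the argument and is absent from your sketch.

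Two smaller points. First, the constraint separating $p\geq 7$ from $p=5$ is not that ``Saito's strictly semistable construction'' degrades with the polarization degree: Saito's construction is used only in the elliptic-fibration case and needs just $p\geq 5$, while the degree-$2$ polarization case goes through Matsumoto's bound $p>(\mathscr{L})^2+4=6$, whence $p\geq 7$. Second, the fallback hypotheses at $p=5$, $m=10$ are not merely ``extra flexibility'': for $[F:\Q]=20$ the Picard lattice has rank $2$, and $U\hookrightarrow\Pic(Z)$ forces $\det(T_{\Q})=1$, i.e.\ $\disc(F)$ square (Proposition \ref{degree20} makes this an equivalence); the hypotheses that $\disc(\Q(\gamma_1))$ is a square or $e$ is even enter exactly through the identity $\disc(F)=\disc(E)^e$ modulo squares (Lemma \ref{disc}). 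Your proposal gestures at this dichotomy but does not identify the precise obstruction.
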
 

Taelman proved that, assuming a strong version of the existence of semistable reduction for $K3$ surfaces in the sense of \cite[Assumption $(\star)$]{Liedtke-Matsumoto}, then Conjecture \ref{Taelman-conjecture} is true \cite[Theorem 3]{Taelman}.

\begin{rem}\label{Tate}
\rm{For a $K3$ surface $X$ over $\F_{q^n}$ as in Conjecture \ref{Taelman-conjecture} or Theorem \ref{uncondition}, the height $h(X_{\overline{\F}_p})$ of the formal Brauer group associated with $X_{\overline{\F}_p}:=X\otimes_{\F_{q^n}}{\overline{\F}_p}$ is equal to the integer $h$ of Condition \ref{cond}; see \cite[Proposition 7]{Taelman}. We also have $\rho(X_{\overline{\F}_p})=22-2m$ by the Tate conjecture \cite{Madapusi}, \cite{Maulik}, \cite{Charles13}, \cite{Kim-Madapusi}. Here $\rho(X_{\overline{\F}_p})$ is the Picard number of $X_{\overline{\F}_p}$. In fact, for $K3$ surfaces $X$ constructed in the proof of Theorem \ref{uncondition}, the equality $\rho(X_{\overline{\F}_p})=22-2m$ follows from the construction, and we do not need the Tate conjecture for this equality; see Section \ref{CM} and Section \ref{main} for details.}
\end{rem} 
\begin{rem}
\rm{It seems an interesting but difficult question to ask whether we can take $n=1$ in Conjecture \ref{Taelman-conjecture} or Theorem \ref{uncondition}; see \cite[Remark 5]{Taelman}.}
\end{rem}
\begin{rem}
\rm{Currently, by our methods, it seems difficult to prove Conjecture \ref{Taelman-conjecture} unconditionally in the remaining cases (i.e.\ when $p\leq3$, or $p=5$, $m=10$, $\disc(\Q(\gamma_1))$ is not a square, and $e$ is odd). See Remark \ref{can not weaken} and Remark \ref{counter} for explanations on technical difficulties.}
\end{rem}

In the proof of Theorem \ref{uncondition}, we basically follow Taelman's strategy. We shall construct a $K3$ surface over a finite field with given $L$-function as the reduction modulo $p$ of an appropriate $K3$ surface with $CM$. To obtain unconditional results, we need to apply Matsumoto's good reduction criterion without a priori assuming a strong version of the existence of semistable reduction for $K3$ surfaces in the sense of \cite[Assumption $(\star)$]{Liedtke-Matsumoto}. The point is that we can construct an appropriate $K3$ surface with $CM$ so that it admits an elliptic fibration with a section or an ample line bundle of low degree. Then we use Saito's results to construct a strictly semistable model and apply a slight refinement of Matsumoto's criterion.
  
 The outline of this paper is as follows. In Section \ref{good}, we recall Matsumoto's  good reduction criterion for $K3$ surfaces. We also give a slight refinement for $K3$ surfaces with an elliptic fibration with a section and apply these results to $K3$ surfaces with $CM$. In Section \ref{lattice}, we recall Bayer-Fluckiger's results on quadratic spaces and $CM$ fields. Section \ref{CM} is the technical heart of this paper. In Section \ref{CM}, we construct appropriate projective $K3$ surfaces over $\C$ with $CM$. Previously, Taelman constructed for any $CM$ field $F$ with $[F:\Q]\leq20$, a projective $K3$ surface over $\C$ with $CM$ by $F$ by using Bayer-Fluckiger's results. Improving his arguments and using Nikulin's results on primitive embeddings of lattices into the $K3$ lattice, we can construct a projective $K3$ surface over $\C$ with $CM$ by $F$ so that it admits an elliptic fibration with a section or an ample line bundle of degree $2$. In Section \ref{main}, we prove Theorem \ref{uncondition}. In Section \ref{given rho and h}, as an application of Theorem \ref{uncondition}, we construct $K3$ surfaces with given geometric Picard number and height over finite fields of characteristic $p\geq5$. 
\section{Good reduction criterion for $K3$ surfaces}\label{good}
In this section, we recall the good reduction criterion for $K3$ surfaces due to Matsumoto \cite{Matsumoto}. We also give a slight refinement for $K3$ surfaces with an elliptic fibration with a section.
Then we apply these results to $K3$ surfaces with $CM$.

First, we recall well-known facts about $K3$ surfaces.
Let $U$ be the \textit{hyperbolic plane}, i.e.\ $U$ is the lattice $$U:=\Z{e} \oplus \Z{f}$$
such that its intersection pairing is given by $(e, e)=(f, f)=0$ and $(e, f)=1$.

The following result is well-known; see the proof of \cite[Chapter 14, Corollary 3.8]{Huybrechts} for example.
Although it is stated and proved only for complex $K3$ surfaces in \cite[Chapter 14, Corollary 3.8]{Huybrechts}, the same argument works for $K3$ surfaces over an algebraically closed field of characteristic different from $2, 3$.

\begin{prop}\label{elliptic}
Let $X$ be a $K3$ surface over an algebraically closed field $k$ of characteristic different from $2, 3$.
Then the following assertions are equivalent:
\begin{itemize}
	\item There exists an embedding of lattices
$$U \hookrightarrow \Pic(X),$$
where $U$ is the hyperbolic plane.
	\item $X$ admits an elliptic fibration with a section.
\end{itemize}
\end{prop}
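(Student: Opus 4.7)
The plan is to prove the two implications separately, following the standard arguments over $\C$ and verifying that the relevant ingredients extend to positive characteristic different from $2, 3$.

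For the implication from an elliptic fibration to the embedding, I would take $F$ to be the class of a fibre and $S$ the class of the chosen section. Adjunction, combined with $K_X = 0$ and $S \cong \P^1$, gives $S^2 = -2$. Together with $F^2 = 0$ and $F \cdot S = 1$, the classes $e := F$ and $f := F + S$ satisfy $e^2 = f^2 = 0$ and $e \cdot f = 1$, so they span a copy of $U$ in $\Pic(X)$.

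For the converse, starting from an embedding $U \hookrightarrow \Pic(X)$ with generators $e_0, f_0$, I set $s_0 := f_0 - e_0$, so that $e_0^2 = 0$, $s_0^2 = -2$, $e_0 \cdot s_0 = 1$, and $e_0$ is primitive in $\Pic(X)$. After replacing $e_0$ by $-e_0$ if necessary, I may assume $e_0$ lies in the closure of the positive cone. The first key step is to move $e_0$ into the nef cone via the Weyl group $W \subset O(\Pic(X))$ generated by reflections in $(-2)$-classes: the nef cone is a fundamental domain for the $W$-action on the positive cone, so there exists $w \in W$ with $e_1 := w(e_0)$ nef. By Saint-Donat's theorem on K3 surfaces, the linear system $|e_1|$ is then base-point free of dimension one, and the resulting morphism $\pi : X \to \P^1$ has integral general fibres of arithmetic genus one. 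The characteristic hypothesis excludes quasi-elliptic fibrations, so $\pi$ is an honest elliptic fibration.

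To extract a section, I would set $s_1 := w(s_0)$, so that $s_1^2 = -2$ and $e_1 \cdot s_1 = 1$. Riemann--Roch on $X$ gives $\chi(s_1) = s_1^2/2 + 2 = 1$, hence either $s_1$ or $-s_1$ is effective; nefness of $e_1$ together with $e_1 \cdot s_1 = 1 > 0$ forces $s_1$ itself to be effective. Writing $s_1 = \sum n_i C_i$ with the $C_i$ irreducible and $n_i \geq 1$, the identity $1 = \sum n_i (e_1 \cdot C_i)$ combined with $e_1 \cdot C_i \geq 0$ singles out a unique component $C_j$ with $n_j = 1$ and $e_1 \cdot C_j = 1$. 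Then $C_j$ meets a general fibre of $\pi$ in exactly one point, so $\pi|_{C_j}$ is a birational morphism to $\P^1$ and hence an isomorphism, giving the desired section. The main obstacle is the Weyl-group/nef-cone step together with the exclusion of quasi-elliptic fibrations; both are valid for K3 surfaces in characteristic different from $2, 3$ but can fail in characteristics $2$ or $3$, which is exactly why the hypothesis is imposed.
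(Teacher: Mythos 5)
Your argument is correct and is essentially the standard one the paper itself invokes: the paper gives no independent proof but simply refers to the proof of \cite[Chapter 14, Corollary 3.8]{Huybrechts}, which proceeds exactly as you do (extract $e$, $f-e$ from $U$, move the isotropic class into the nef cone by the Weyl group, apply the linear-system results to get a genus-one fibration, exclude quasi-elliptic fibrations using $\mathrm{char}\,k\neq2,3$, and produce the section from the $(-2)$-class). No substantive differences to report.
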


\begin{rem}
\rm{If the assertions of Proposition \ref{elliptic} hold, the $K3$ surface $X$ is isomorphic to the Jacobian of an elliptic fibration of $X$ with a section; see \cite[Chapter 11]{Huybrechts} for details.
However, we will not use this fact in this paper.}
\end{rem}

\begin{prop}[{\cite[Chapter 14, Corollary 3.8]{Huybrechts}}]\label{12}
Let $X$ be a $K3$ surface over an algebraically closed field $k$ of characteristic different from $2, 3$. Assume that the Picard number of $X$ satisfies $12 \leq \rho(X) \leq 20$.
Then $X$ admits an elliptic fibration with a section.
\end{prop}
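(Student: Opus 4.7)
The plan is to invoke Proposition \ref{elliptic}, which reduces the statement to producing a lattice embedding $U \hookrightarrow \Pic(X)$. The N\'eron-Severi lattice is even of signature $(1,\rho(X)-1)$, hence indefinite since $\rho(X) \geq 12$. The rest of the argument is purely lattice-theoretic, via Nikulin's theorem on orthogonal splittings of even indefinite lattices.

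Recall that Nikulin's theorem states that an even indefinite lattice $L$ with $\mathrm{rank}(L) \geq \ell(A_L) + 2$ admits an orthogonal decomposition $L \cong U \oplus L'$, where $A_L$ is the discriminant group and $\ell(A_L)$ denotes its minimum number of generators. Thus it suffices to verify
$$\ell(A_{\Pic(X)}) \leq \rho(X) - 2.$$
I would actually establish the stronger bound $\ell(A_{\Pic(X)}) \leq 22 - \rho(X)$; combined with $\rho(X) \geq 12$ this gives $22-\rho(X) \leq 10 \leq \rho(X)-2$, which is exactly what is needed.

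To prove the bound, fix a prime $\ell$ distinct from $\mathrm{char}(k)$. The cycle class map embeds $\Pic(X) \otimes \mathbb{Z}_\ell$ primitively into the unimodular $\mathbb{Z}_\ell$-lattice $H^2_{\rm{\acute{e}t}}(X, \mathbb{Z}_\ell(1))$ of rank $22$. The orthogonal complement (the $\ell$-adic transcendental lattice) has rank $22-\rho(X)$, and unimodularity of the ambient lattice identifies the $\ell$-primary component of $A_{\Pic(X)}$ with the discriminant group of this complement, so it can be generated by at most $22-\rho(X)$ elements. The analogous bound at the prime $p=\mathrm{char}(k)$ comes from the corresponding statement in crystalline cohomology, which is legitimate because $\rho(X) \leq 20$ forces the $K3$ surface to have finite height and therefore places $\Pic(X)$ primitively inside the zero-slope part of $H^2_{\mathrm{crys}}(X/W)$. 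Taking the maximum over all primes gives the desired bound.

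The main obstacle is the discriminant bound in positive characteristic, especially at the prime $p=\mathrm{char}(k)$; once it is in hand, Nikulin's theorem produces the orthogonal splitting $\Pic(X) \cong U \oplus L'$ and Proposition \ref{elliptic} delivers the elliptic fibration with a section. In characteristic zero one may sidestep the crystalline input entirely by embedding $\Pic(X)$ primitively into the unimodular $K3$ lattice $\Lambda_{K3} = U^{\oplus 3} \oplus E_8(-1)^{\oplus 2}$, which is the approach taken in \cite[Chapter 14, Corollary 3.8]{Huybrechts}.
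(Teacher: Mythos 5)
Your reduction to Proposition \ref{elliptic} plus Nikulin's splitting criterion (split off a copy of $U$ from an even indefinite lattice $L$ once $\mathrm{rank}(L)\geq \ell(A_L)+2$) is exactly the lattice-theoretic core of \cite[Chapter 14, Corollary 3.8]{Huybrechts}, but your way of getting the discriminant bound $\ell(A_{\Pic(X)})\leq 22-\rho(X)$ in characteristic $p$ is genuinely different from the paper's. The paper never touches $\ell$-adic or crystalline cohomology: it observes that $\rho(X)\leq 20$ rules out supersingularity by the Tate conjecture, lifts $X$ \emph{together with its full Picard lattice} to characteristic $0$ via Nygaard--Ogus, applies the characteristic-zero statement there (primitive embedding into the unimodular $\Lambda_{K3}$, then Nikulin), and transports the resulting copy of $U\hookrightarrow\Pic$ back to $X$ before invoking Proposition \ref{elliptic}. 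Your route stays in characteristic $p$ and avoids deformation theory, which is a real gain in self-containedness, but it leans on two nontrivial inputs that you assert rather than justify. First, ``$\rho(X)\leq 20$ forces finite height'' is itself an application of the Tate conjecture (the paper cites \cite{Madapusi}, \cite{Maulik}, \cite{Charles13}, \cite{Kim-Madapusi} for precisely this step); for an Artin-supersingular surface the crystalline primitivity you need would actually fail, so this hypothesis is doing real work. Second, the primitivity of $\Pic(X)\otimes W$ in $H^2_{\mathrm{crys}}(X/W)$ for a finite-height $K3$ is equivalent to injectivity of $c_1\otimes k$ into de Rham cohomology; this is a theorem (Ogus, Illusie), not a formality, and it is exactly where the content of your $p$-primary bound lives, so it needs a citation. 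You should also take the orthogonal complement inside the full rank-$22$ unimodular $H^2_{\mathrm{crys}}(X/W)$ rather than inside the ``zero-slope part'', whose induced pairing is not unimodular. The $\ell$-adic half is fine: torsion-freeness of $T_{\ell}\Br(X)$ in the Kummer sequence gives the primitivity of $\Pic(X)\otimes\Z_{\ell}$ in $H^2_{\mathrm{\acute{e}t}}(X,\Z_{\ell}(1))$. With those two references supplied your argument closes; the paper's lifting argument is shorter only because it outsources everything to the characteristic-zero statement.
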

\begin{proof}
See \cite[Chapter 14, Corollary 3.8]{Huybrechts}. Note that although \cite[Chapter 14, Corollary 3.8]{Huybrechts} is stated and proved only for $K3$ surfaces over $\C$, it also holds in characteristic $p\geq5$. Indeed, the $K3$ surface $X$ is not supersingular by the Tate conjecture \cite{Madapusi}, \cite{Maulik}, \cite{Charles13}, \cite{Kim-Madapusi}, and we can lift the Picard group to characteristic $0$; see the proof of \cite[Proposition 5.5]{Nygaard-Ogus} and the proof of \cite[Theorem 5.6]{Nygaard-Ogus}. Then we apply Proposition \ref{elliptic}.
\end{proof}

Next, we recall Saito's results on the construction of strictly semistable models \cite{Saito}. We state his results in the form suitable for our purposes. For an extension of fields $L/K$ and a scheme $X$ over $K$, we put $X_{L}:=X\otimes_{K}{L}$. See also the proof of (c) in \cite[p.253]{Matsumoto}.
\begin{prop}[Saito {\cite[Corollary 1.9, Theorem 2.9]{Saito}}]\label{semistable model}
Let $K$ be a discrete valuation field whose residue field is of characteristic $p \geq 5$. Let $X$ be a $K3$ surface over $K$. Assume that $X$ admits an elliptic fibration with a section. Then there exist a finite extension $L/K$ and a regular scheme $\mathscr{Y}$ projective flat over the valuation ring $\O_L$ whose special fiber is a reduced simple normal crossing divisor and generic fiber is birational to $X_{L}$. 
\end{prop}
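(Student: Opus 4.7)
The plan is to deduce the proposition from Saito's general results \cite[Corollary 1.9, Theorem 2.9]{Saito}, which produce strictly semistable models for arithmetic surfaces fibered by genus-one curves. The hypothesis that $X$ admits an elliptic fibration with a section is precisely the extra structure that puts $X$ in the range of Saito's machinery; without it, no such global result is available for $K3$ surfaces.

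First I would fix an elliptic fibration $f \colon X \to \P^1_K$ with section $s$ and spread the pair $(f,s)$ out to a proper flat regular model $\mathcal{X} \to \Spec \O_K$. Such a model exists by Lipman's resolution of arithmetic surfaces, and after blow-ups of the indeterminacy locus the rational map $\mathcal{X} \dashrightarrow \P^1_{\O_K}$ extending $f$ can be promoted to a genuine morphism, with the section $s$ extending to a horizontal divisor.

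Second, I would apply the semistable reduction theorem for elliptic curves to the generic fiber of $f$, regarded as an elliptic curve over the function field of $\P^1_K$. After replacing $K$ by a suitable finite extension $L$, all singular fibers of the resulting elliptic fibration $X_L \to \P^1_L$ become of Kodaira type $I_n$, i.e.\ of multiplicative type. The characteristic assumption $p \geq 5$ is essential here, because it ensures that the Kodaira type of an elliptic curve is detected tamely by the $j$-invariant, so $L$ can be chosen as a tame extension without wild obstruction. Third, I would feed the resulting regular model, equipped with its elliptic fibration whose bad fibers are all of type $I_n$ together with its section, into Saito's theorems; the output, after perhaps one further finite extension $L'/L$, is a regular proper flat $\O_{L'}$-scheme $\mathscr{Y}$ birational to $X_{L'}$ whose special fiber is a reduced simple normal crossing divisor. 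Note that Saito produces a scheme only birational to $X_L$, not isomorphic, which matches the statement of the proposition.

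The main obstacle, which is exactly the technical heart of Saito's paper and which I would not try to reproduce, is to perform the resolution by a globally coherent sequence of blow-ups so that the total space $\mathscr{Y}$ is simultaneously regular and has a \emph{strictly} semistable special fiber, rather than merely semistable with possibly nonreduced or singular components. The delicate combinatorics occur at the triple points of the special fiber and at the points where vertical fibers meet the horizontal divisor coming from the section; Saito controls these by using the local multiplicative degeneration model $xy = \pi$ provided by the $I_n$ fibers together with the trivialization of $f$ near $s$.
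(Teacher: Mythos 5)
The paper offers no proof of this proposition beyond the citation of Saito and a pointer to the deduction on p.~253 of Matsumoto's paper, so what must be judged is whether your reduction to Saito's theorems is correct. Your overall framing (spread the fibration out over $\P^1_{\O_K}$, invoke Saito, accept a model only birational to $X_L$) is the right one, but your second step contains a genuine error. A finite extension $L/K$ of the constant field induces only \emph{unramified} extensions of the completions of $K(\P^1)$ at the closed points of $\P^1_K$, and unramified base change never turns additive reduction into multiplicative reduction; an elliptic $K3$ surface typically has fibers of type $II$, $I_n^*$, $II^*$, etc., and no extension of $K$ removes them. Forcing all fibers into type $I_n$ would require a cover of $\P^1$ itself, ramified at the bad points, which changes the birational class of the generic fiber and destroys the $K3$ property. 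Fortunately this step is also unnecessary: the Kodaira fibers of $X_L \to \P^1_L$ sit inside the smooth generic fiber $X_L$ and cause no trouble. What Saito's Corollary~1.9 and Theorem~2.9 actually control is the degeneration of the family of elliptic curves over the \emph{two-dimensional} base $\P^1_{\O_K}$ in the arithmetic direction, i.e.\ along the special fiber of $\P^1_{\O_K}$ and at its intersections with the horizontal components of the discriminant locus. The hypothesis $p \geq 5$ is the tameness hypothesis there: the finite part of the local monodromy of an elliptic fibration has order divisible only by $2$ and $3$, hence is tame in residue characteristic $\geq 5$. It is not about reading off Kodaira types of the horizontal fibers from the $j$-invariant; in the paper's application those fibers live over residue characteristic $0$ anyway.

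A secondary problem: you invoke Lipman's resolution to produce a regular proper flat model $\mathcal{X}$ of $X$ over $\O_K$. Lipman's theorem resolves excellent schemes of dimension $2$, whereas such a model has dimension $3$; resolution in that setting is not available by these means, and avoiding it is precisely why one works with the family of curves over the two-dimensional base $\P^1_{\O_K}$ (where Saito's log-smooth extension theorem applies, and where Lipman-type arguments are legitimate) rather than with the threefold directly. Your closing paragraph correctly locates the hard combinatorics inside Saito's paper, but the configuration you propose to feed into his theorems is not the one they accept.
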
 

The following theorem is essentially proved in Matsumoto's paper; see \cite[Remark 1.2 (5)]{Matsumoto} and the proof of (c) in \cite[p.253]{Matsumoto}. See also \cite{Liedtke-Matsumoto}.
\begin{thm}[Matsumoto \cite{Matsumoto}]\label{criterion}
Let $K$ be a henselian discrete valuation field with perfect residue field of characteristic $p>0$. Let $X$ be a $K3$ surface over $K$ satisfying at least one of the following conditions:
\begin{itemize}
\item The $\ell$-adic $\acute{e}$tale cohomology $H^2_{\mathrm{{\acute{e}t}}}(X_{\overline{K}}, {{\Q}_{\ell}})$ is unramified for some prime number $\ell\neq{p}$.
\item $K$ is complete of characteristic $0$ and the $p$-adic $\acute{e}$tale cohomology $H^2_{\mathrm{{\acute{e}t}}}(X_{\overline{K}}, {{\Q}_{p}})$ is crystalline.
 \end{itemize}
 Moreover, assume that at least one of the following conditions holds:
 \begin{itemize}
\item $X$ admits an ample line bundle $\mathscr{L}$ with $p>(\mathscr{L})^2+4$.
\item $p\geq5$ and $X$ admits an elliptic fibration with a section.
\end{itemize}
  Then there exist a finite extension $K'/K$ and an algebraic space $\mathscr{X}$ proper smooth over the valuation ring $\mathscr{O}_{K'}$ such that $\mathscr{X}\otimes_{\mathscr{O}_{K'}}{K'}\simeq{X_{K'}}$. 
 \end{thm}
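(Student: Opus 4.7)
The plan is to follow Matsumoto's strategy, substituting Saito's Proposition \ref{semistable model} for the construction of the strictly semistable model in the elliptic fibration case. After passing to a suitable finite extension $K'/K$, I would produce a regular projective model $\mathscr{Y}\to\Spec\O_{K'}$ whose generic fiber is birational to $X_{K'}$ and whose special fiber is a reduced simple normal crossing divisor. Saito's theorem yields this in the elliptic fibration case with $p\geq 5$, while in the polarized case the hypothesis $p>(\mathscr{L})^2+4$ lets Matsumoto's original construction (essentially via the projective moduli of polarized $K3$ surfaces of low degree, where $p$ is coprime to the degree in the needed range) produce it.

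Next, I would use the hypothesis on $H^2_{\mathrm{\acute{e}t}}(X_{\overline{K}})$ to constrain the geometry of the special fiber. The unramifiedness (respectively crystallinity) forces the monodromy operator $N$ acting on $H^2$ to vanish. Feeding this vanishing into the Rapoport-Zink weight spectral sequence for $\mathscr{Y}$, or its log-crystalline analogue in the $p$-adic case, together with the Kulikov-Persson-Pinkham-type classification of semistable $K3$ degenerations adapted to mixed characteristic (as developed by Nakkajima and carried out in Matsumoto's work), one rules out the Type II and Type III degenerations. The special fiber is then of Type I: a single component is a smooth $K3$ surface, while the remaining components are ruled or rational configurations that contribute trivially to $H^2$.

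Finally, I would contract these trivial components via Artin's theorem on algebraic contractions to produce a proper smooth algebraic space $\mathscr{X}\to\Spec\O_{K'}$ whose generic fiber recovers $X_{K'}$; the output is in general only an algebraic space because such contractions need not preserve projectivity, which accounts for the weakening from scheme to algebraic space in the conclusion. The main obstacle will be verifying that Matsumoto's monodromy/classification analysis and the contraction step, originally carried out for the polarized semistable model, go through verbatim for Saito's model arising from an elliptic fibration. Since these steps depend only on the semistable structure of $\mathscr{Y}$ and on the cohomological vanishing of $N$, rather than on how the model was produced, the adaptation should be essentially formal; nonetheless one must check that no implicit use of a particular polarization enters the weight-filtration argument, and that the contraction works uniformly in the two cases.
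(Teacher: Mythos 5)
Your proposal is correct and takes essentially the same route as the paper: both reduce each case to the existence of a strictly semistable model (with Saito's theorem, Proposition \ref{semistable model}, supplying it in the elliptic-fibration case) and then observe that the rest of Matsumoto's argument --- monodromy vanishing, the weight spectral sequence, the Kulikov-type classification, and the contraction to a smooth algebraic space --- depends only on the model and the cohomological hypothesis, which is exactly the content of \cite[Assumption $(\star)$, Proposition 3.1, Theorem 3.3]{Liedtke-Matsumoto} that the paper cites as a black box. The only imprecision is incidental: in the polarized case Matsumoto's bound $p>(\mathscr{L})^2+4$ comes from applying Saito's semistable reduction to the fibration in curves cut out by $|\mathscr{L}|$ (whose genus is $(\mathscr{L})^2/2+1$), not from moduli of polarized $K3$ surfaces.
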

 \begin{proof}
  When $X$ admits an ample line bundle $\mathscr{L}$ with $p>(\mathscr{L})^2+4$, the assertion is proved in \cite[Theorem 1.1]{Matsumoto}.
  Precisely, the base field $K$ is assumed to be complete in \cite{Matsumoto}. But exactly the same proof works for henselian discrete valuation fields. See also \cite{Liedtke-Matsumoto} where the results are stated over henselian discrete valuation fields.
  
   As explained in \cite[Remark 1.2 (5)]{Matsumoto} and \cite{Liedtke-Matsumoto}, if we can construct a strictly semistable model, we achieve the theorem. More precisely, if we have a finite extension $L/K$ and a regular scheme $\mathscr{Y}$ proper flat over the valuation ring $\mathscr{O}_L$ whose special fiber is a reduced simple normal crossing divisor and generic fiber is birational to $X_{L}$, then $X$ satisfies \cite[Assumption $(\star)$]{Liedtke-Matsumoto} by \cite[Proposition 3.1]{Liedtke-Matsumoto}. Here we need $p \geq 5$.
   Then, we achieve the theorem by \cite[Theorem 3.3]{Liedtke-Matsumoto}.
   
When $p \geq 5$ and $X$ admits an elliptic fibration with a section, by Proposition \ref{semistable model}, we achieve the theorem as above. 
\end{proof}

\begin{rem}
{\rm{It is not always possible to construct $\mathscr{X}$ as a scheme in Theorem \ref{criterion}; see \cite[Example 5.2]{Matsumoto}.}}
\end{rem}
We recall the definition of a complex projective $K3$ surface with $CM$. Let $X$ be a projective $K3$ surface over $\C$. Let $$T_{X}:=\Pic(X)^{\perp}_{\Q} \subset H^{2}(X, \Q(1))$$ be the transcendental part of the cohomology, which has the $\Q$-Hodge structure coming from $H^{2}(X, \Q(1))$. Let $$E_{X}:={\rm{End}}_{\rm{Hdg}}(T_{X})$$ be the $\Q$-algebra of $\Q$-linear endomorphisms on $T_{X}$ preserving the $\Q$-Hodge structure on it.
We say $X$ has \textit{complex multiplication $(CM)$} if $E_{X}$ is $CM$ and ${\rm{dim}}_{E_{X}}(T_{X})=1$.
Here a number field is called $CM$ if it is a purely imaginary quadratic extension of a totally real number field. 
Pjatecki{\u\i}-{\v{S}}apiro and {\v{S}}afarevi{\v{c}} showed that every $K3$ surface with $CM$ is defined over a number field \cite[Theorem 4]{Shafarevich2}.
Rizov showed that, for a $CM$ field $F$, a $K3$ surface $X$ over $\C$ with $CM$ by $F$ is defined over an abelian extension of $F$ \cite[Corollary 3.9.4]{Rizov10}. However we will not use this fact in this paper.

As an application of Theorem \ref{criterion}, we have the following results on the reduction of $K3$ surfaces with $CM$; see also \cite[Theorem 6.3]{Matsumoto}. 
 \begin{cor}\label{CM-reduction}
 Let $X$ be a projective $K3$ surface over $\C$ with $CM$. Let $K$ be a number field embedded in $\C$ such that $X$ is defined over $K$. Let $v$ be a finite place of $K$ above a prime number $p$. Assume that at least one of the following conditions holds:
 \begin{itemize}
\item $X$ admits an ample line bundle $\mathscr{L}$ with $p>(\mathscr{L})^2+4$.
\item $p\geq5$ and $X$ admits an elliptic fibration with a section.
\end{itemize} 
Then there exist a finite extension $K'/K_v$ and an algebraic space $\mathscr{X}$ proper smooth over the valuation ring $\mathscr{O}_{K'}$ such that $\mathscr{X}\otimes_{\mathscr{O}_{K'}}{K'}\simeq{X_{K'}}$.
Here $K_v$ is the completion of $K$ at $v$.
 \end{cor}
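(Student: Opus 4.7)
The plan is to apply Theorem \ref{criterion} to the base change $X_{K_v}$, viewed as a $K3$ surface over the complete (hence henselian) discretely valued field $K_v$. The geometric hypothesis --- an ample line bundle of low degree, or an elliptic fibration with a section --- descends from $X$ to its base change, so the second bullet of Theorem \ref{criterion} is automatic. What remains is to verify the first bullet: that for some $\ell \neq p$, the representation $H^2_{\mathrm{\acute{e}t}}(X_{\overline{K_v}}, \mathbb{Q}_\ell)$ is unramified, possibly after replacing $K$ by a finite extension.

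To obtain this potential unramifiedness, I would split $H^2_{\mathrm{\acute{e}t}}(X_{\overline{K}}, \mathbb{Q}_\ell(1))$ into the algebraic part spanned by divisor classes and its orthogonal complement, which after scalar extension is identified with $T_X \otimes_\mathbb{Q} \mathbb{Q}_\ell$. Each divisor on $X_{\overline{K}}$ is defined over some finite extension of $K$, so after enlarging $K$ by a finite extension the Galois group acts trivially on the algebraic part, in particular unramifiedly at $v$. For the transcendental part, the CM hypothesis means that $E_X$ is a $CM$ field with $\dim_{E_X} T_X = 1$; the Kuga-Satake abelian variety $A$ associated with $X$ then inherits $CM$ from $X$ and so has potential good reduction at $v$ by the classical N\'eron-Ogg-Shafarevich criterion for $CM$ abelian varieties. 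Since $T_X \otimes \mathbb{Q}_\ell$ appears (up to Tate twist) as a sub-$\mathrm{Gal}(\overline{K}/K)$-representation of a tensor construction on $H^1_{\mathrm{\acute{e}t}}(A_{\overline{K}}, \mathbb{Q}_\ell)$, the potential unramifiedness of $A$ at $v$ transfers to $T_X \otimes \mathbb{Q}_\ell$.

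After one further finite extension of $K$ that simultaneously fixes the divisor classes and unramifies the transcendental Galois representation at (a place above) $v$, the hypotheses of Theorem \ref{criterion} are fully satisfied for the base change of $X$ to the corresponding completion, and the theorem produces the finite extension $K'/K_v$ and algebraic space $\mathscr{X}$ proper smooth over $\mathscr{O}_{K'}$ with $\mathscr{X} \otimes_{\mathscr{O}_{K'}} K' \simeq X_{K'}$.

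The main obstacle is the rigorous justification of potential unramifiedness for the transcendental factor. This is essentially a standard consequence of the theory of $CM$ motives --- equivalently, of the Kuga-Satake correspondence combined with the good reduction properties of $CM$ abelian varieties --- but it must be invoked carefully; alternatively, one could describe the action on $T_X \otimes \mathbb{Q}_\ell$ via an algebraic Hecke character of $E_X$ (which is unramified outside a finite set of places and becomes unramified at $v$ after a finite extension), obtaining the same conclusion.
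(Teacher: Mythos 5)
Your proposal is correct and follows essentially the same route as the paper: the paper's proof likewise consists of observing that the $CM$ hypothesis lets one assume, after a finite extension of $K_v$, that $H^2_{\mathrm{\acute{e}t}}(X_{\overline{K}_v},\Q_{\ell})$ is unramified for some $\ell\neq p$, and then invoking Theorem \ref{criterion}. The only difference is that the paper delegates the potential unramifiedness entirely to the proof of Matsumoto's Theorem 6.3, whereas you sketch that step yourself via the Kuga--Satake variety (equivalently, the Hecke-character description of the transcendental part), which is exactly the kind of argument that citation contains.
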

 \begin{proof}
 Since $X$ has $CM$, after replacing $K_v$ by a finite extension of it, we may assume $H^2_{\mathrm{{\acute{e}t}}}(X_{\overline{K}_v}, {{\Q}_{\ell}})$ is unramified for some prime number $\ell\neq{p}$; see the proof of \cite[Theorem 6.3]{Matsumoto}. Then we apply Theorem \ref{criterion}.
 \end{proof}

\section{Quadratic spaces and $CM$ fields}\label{lattice}
In this section, we recall the results of Bayer-Fluckiger on embeddings of quadratic spaces over $\Q$ arising from a $CM$ field into a given quadratic space over $\Q$.

First, we recall some definitions on quadratic spaces. Let $k$ be a field of characteristic different from $2$ and $(V, q)$ a quadratic space over $k$. This means that $V$ is a finite dimensional $k$-vector space equipped with a nondegenerate symmetric bilinear form
$$q\colon {V}  \times  {V}  \longrightarrow {k}.$$
The \textit{determinant} ${\rm{det}}(V)\in{k^*}/{({k^*}^2)}$ and the \textit{Hasse invariant} $w(V)\in\Br(k)[2]$ are defined as follows. We put $m:={\rm{dim}}_{k}V$. There exists an isomorphism of quadratic spaces 
$$V\simeq\langle{a_1, \dotsc, a_m}\rangle,$$                                  
where $a_1, \dotsc, a_m\in{k}^*$ and $\langle{a_1, \dotsc, a_m}\rangle$ is a diagonal form. Then we define
\begin{align*}
{\rm{det}}(V)&:=\prod^{m}_{i=1}a_i\in{{k^*}/{({k^*}^2)}},\\
w(V)&:=\sum_{i<j}(a_i,a_j)\in\Br(k)[2],
\end{align*}
where $(\ ,\ )$ denotes the Hilbert symbol and $\Br(k)[2]$ denotes the subgroup of elements of the Brauer group $\Br(k)$ killed by $2$. Then ${\rm{det}}(V)$ and $w(V)$ do not depend on the choice of an isomorphism $V\simeq\langle{a_1, \dotsc, a_m}\rangle$; see \cite[Chapter IV]{Serre} for details.

Let $F$ be a $CM$ field of degree $[F:\Q]=2m$.
The maximal totally real subfield of $F$ is denoted by $F_0$.
The discriminant of $F$ is denoted by $\disc(F) \in \Z$. It is easy to see that $(-1)^m\disc(F)$ is a positive integer.
For each $\lambda\in{F^*_0}$, we define the quadratic space $(F, q_{\lambda})$ over $\Q$ by  
\begin{align*}
q_{\lambda}\colon {F}  \times  {F}  &\longrightarrow {\Q}\\
                                    ({x},{y}) &\longrightarrow {{\mathrm{Tr}}_{F/{\Q}}}(\lambda{x}{i(y)}),
\end{align*}
where $i\colon F\to{F}$ is the involution defined by the nontrivial element of ${\rm{Gal}}(F/{F_0})$.

Let $(V,q)$ be a quadratic space over $\Q$ of dimension $2m$ and $F$ a $CM$ field of degree $[F:\Q]=2m$.
We say the \textit{hyperbolicity condition} holds for $F$ and $V$ if, for every prime number $p$ such that all the places of $F_0$ above $p$ split in $F$, we have 
$$w(V_{\Q_p})=w(U^{\oplus{m}}_{\Q_p}) \in \Br(\Q_p)[2],$$
where we put $V_{\Q_p}:=V\otimes_{\Q}{\Q_p}$ and $U_{\Q_p}:=U\otimes_{\Z}{\Q_p}$ is the hyperbolic plane over $\Q_p$.

In \cite{Bayer}, Bayer-Fluckiger showed general results on embeddings of an algebraic torus into a given orthogonal group over a global field of characteristic different from $2$. In terms of quadratic spaces, her results are stated as follows.
See \cite[Proposition 1.3.1 and Corollary 4.0.3]{Bayer}.
\begin{thm}[Bayer-Fluckiger {\cite[Proposition 1.3.1, Corollary 4.0.3]{Bayer}}]\label{CM-Bayer}
Let $F$ be a $CM$ field of degree $[F:\Q]=2m$, and $(V, q)$ a quadratic space over $\Q$ of dimension $2m$. Then there exists $\lambda\in{F^*_0}$ such that $$(V, q)\simeq(F, q_{\lambda})$$ if and only if all of the following conditions hold:
\begin{itemize}
\item ${\rm{det}}(V)=(-1)^{m}{\rm{disc}}(F)$ in ${{{\Q}^*}/{({{\Q}^*}^2})}$.
\item The signature of $V$ is even.
\item The hyperbolicity condition holds for $F$ and $V$.
\end{itemize}
\end{thm}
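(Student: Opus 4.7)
The plan is to verify the three listed conditions directly for any form of shape $(F, q_\lambda)$ (the ``only if'' direction) and then to establish the converse by showing that, as $\lambda$ varies over $F_0^*$, the forms $(F, q_\lambda)$ realize every admissible combination of local invariants, to which the Hasse--Minkowski principle applies.

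For the ``only if'' direction, a Gram-matrix computation in a fixed $\Q$-basis of $F$ shows that $\det(F, q_\lambda) = (-1)^m \disc(F)$ in $\Q^*/(\Q^*)^2$, independently of $\lambda$: replacing $\lambda$ by $\alpha \lambda$ with $\alpha \in F_0^*$ scales the Gram matrix by $N_{F/\Q}(\alpha) = N_{F_0/\Q}(\alpha)^2$, which is a square. For the signature, the identification $F \otimes_\Q \R \simeq \C^m$ (under which $i$ acts as complex conjugation on each factor) decomposes $q_\lambda$ as an orthogonal sum of two-dimensional forms $z \mapsto \mathrm{Tr}_{\C/\R}(\sigma_j(\lambda)\, z \bar{z})$ of signature $(2,0)$ or $(0,2)$, so the total signature $(2k, 2m-2k)$ is even. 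Finally, at a prime $p$ all of whose places of $F_0$ split in $F$, the decomposition $F \otimes_\Q \Q_p \simeq \prod_w (K_w \times K_w)$ with the involution swapping the two factors makes $q_\lambda$ visibly hyperbolic over $\Q_p$, confirming the hyperbolicity condition.

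For the ``if'' direction, assuming the three conditions for $(V, q)$, the strategy is to produce $\lambda \in F_0^*$ such that $(F, q_\lambda)_{\Q_v} \simeq V_{\Q_v}$ at every place $v$ of $\Q$; Hasse--Minkowski then upgrades these local isomorphisms to a global one. The heart of the argument, and the main obstacle, is a local realizability statement: at each place $v$, some $\lambda_v \in (F_0 \otimes \Q_v)^*$ must realize the local invariants of $V_{\Q_v}$. At the real place this is controlled by the signs of the real embeddings of $\lambda$, and at a finite place $v$ one has to compute the image of $\lambda_v \mapsto w((F, q_{\lambda_v})_{\Q_v})$ inside $\Br(\Q_v)[2]$. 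This image is controlled by the local norm map $(F \otimes \Q_v)^* \to (F_0 \otimes \Q_v)^*$, and equals the full set of admissible Hasse invariants \emph{except} precisely when all places of $F_0$ above $v$ split in $F$, in which case only the hyperbolic invariant is attainable and the hyperbolicity hypothesis on $V$ provides the required match. This local analysis is the content of Bayer-Fluckiger's cohomological study of embeddings of algebraic tori into orthogonal groups; once it is in hand, weak approximation in $F_0^*$ assembles the $\lambda_v$'s into a global $\lambda$, and Hasse--Minkowski concludes the proof.
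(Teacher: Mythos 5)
The paper offers no proof of this statement: it is imported wholesale from Bayer-Fluckiger, cited as [Proposition 1.3.1, Corollary 4.0.3] of her paper on embeddings of maximal tori in orthogonal groups, so there is no internal argument to compare yours against. Judged on its own terms, your ``only if'' half is correct and essentially complete in outline: the determinant of $q_\lambda$ is independent of $\lambda$ modulo squares because twisting by $\alpha\in F_0^*$ multiplies it by $N_{F/\Q}(\alpha)=N_{F_0/\Q}(\alpha)^2$ (and the base case reduces to the trace form together with the involution $i$, which has determinant $(-1)^m$ on $F=F_0\oplus F_0\sqrt{\beta}$); the signature analysis via $F\otimes_{\Q}\R\simeq\C^m$ with $i$ acting as conjugation, and the hyperbolicity of $q_\lambda$ over $\Q_p$ via the split decomposition with swapped factors, are both exactly right.

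The ``if'' half, however, is a strategy rather than a proof, and the gap sits precisely where you flag it. Two things are missing. First, the local realizability claim --- that $\lambda_v\mapsto w\bigl((F,q_{\lambda_v})_{\Q_v}\bigr)$ attains every admissible Hasse invariant except when all places of $F_0$ above $v$ split in $F$, in which case only the hyperbolic value occurs --- is the substantive content of Bayer-Fluckiger's local computation; asserting it and deferring to her is a citation, not a proof, and at that point one may as well cite the global statement directly, which is what the paper does. Second, even granting local realizability, weak approximation only controls $(F,q_\lambda)_{\Q_v}$ for $v$ in the finite set where you prescribed $\lambda_v$; you must still argue that the Hasse invariants of $(F,q_\lambda)$ and $V$ agree at all remaining places. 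The standard remedy is to enlarge the finite set to contain the archimedean place, $2$, all ramified primes, and all primes where either form is non-generic, and then invoke the product formula for Hasse invariants to force agreement at the one place that cannot be controlled directly; your sketch does not address this. Nothing in the outline is wrong, but as written the hard direction is delegated entirely to the reference --- which is no more (and no less) than the paper itself does.
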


\section{Construction of $K3$ surfaces with $CM$ with additional conditions}\label{CM}
In this section, we shall construct projective $K3$ surfaces over $\C$ with $CM$ by a given $CM$ field. Here the point is that, in order to make Taelman's argument unconditional, we require the $K3$ surfaces constructed in this section satisfy some additional conditions; see Proposition \ref{degree2}.

First, we make some preparations.
\begin{lem}\label{Chebotarev}
Let $F$ be a $CM$ field with maximal totally real subfield $F_0$. Then there exist infinitely many prime numbers $p$ such that there exists a place of $F_0$ above $p$ which does not split in $F$.
\end{lem}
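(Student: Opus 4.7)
The plan is to apply the Chebotarev density theorem to the quadratic extension $F/F_0$. Since $F$ is $CM$, the extension $F/F_0$ has degree $2$ and is therefore automatically Galois, with Galois group $\mathrm{Gal}(F/F_0) = \{1, \tau\}$ of order $2$. The nontrivial element $\tau$ forms a conjugacy class of its own.

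By Chebotarev applied to $F/F_0$, the set of primes $\mathfrak{p}$ of $F_0$ unramified in $F$ whose Frobenius equals $\tau$ has Dirichlet density $1/2$. These are exactly the primes of $F_0$ that are inert (hence non-split) in $F$, so in particular there exist infinitely many such primes $\mathfrak{p} \subset \mathcal{O}_{F_0}$.

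To pass from primes of $F_0$ to primes of $\Q$, I would use the fact that each prime $\mathfrak{p}$ of $F_0$ lies over a unique rational prime $p = \mathfrak{p} \cap \Z$, and that only finitely many primes of $F_0$ (at most $[F_0:\Q]$) sit above any given $p$. Hence the infinite set of inert primes $\mathfrak{p}$ produces an infinite set of rational primes $p$; for each such $p$, the prime $\mathfrak{p}$ is a place of $F_0$ above $p$ that does not split in $F$, which is the desired conclusion.

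There is no real obstacle here: the statement is a direct consequence of Chebotarev. The only point to verify carefully is the finite-to-one passage from places of $F_0$ to rational primes, which is immediate from the multiplicativity of ramification indices and residue degrees in $F_0/\Q$. Alternatively, and equivalently, one could argue by contradiction: if only finitely many such $p$ existed, then for all but finitely many primes $\mathfrak{p}$ of $F_0$ the completion $F \otimes_{F_0} (F_0)_{\mathfrak{p}}$ would be split, so every Frobenius in $\mathrm{Gal}(F/F_0)$ would be trivial outside a finite set, forcing $F = F_0$ by Chebotarev, contradicting $[F:F_0] = 2$.
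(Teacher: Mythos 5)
Your proof is correct, and it is essentially the same argument the paper relies on: the paper gives no proof of its own but simply points to the proof of \cite[Lemma 2.2.2]{Bayer}, which is exactly this application of the Chebotarev density theorem to the degree-$2$ Galois extension $F/F_0$ (inert primes have density $1/2$, hence are infinite in number, and the finite-to-one passage to rational primes yields infinitely many $p$). Nothing further is needed.
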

\begin{proof}
See the proof of \cite[Lemma 2.2.2]{Bayer}.
\end{proof}

 \begin{lem}\label{progression}
Let $p$ be an odd prime number. Let $x$ be an integer not divisible by $p$. Then there exists an odd prime number $q$ such that $q\equiv x \pmod p$ and $q\equiv3 \pmod 4$.
\end{lem}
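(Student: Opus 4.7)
The plan is to reduce the statement to a direct application of Dirichlet's theorem on primes in arithmetic progressions, combined with the Chinese remainder theorem. First I would note that, since $p$ is odd, we have $\gcd(p,4)=1$, so the Chinese remainder theorem guarantees the existence of an integer $y$ satisfying simultaneously
\[
y \equiv x \pmod{p}, \qquad y \equiv 3 \pmod{4}.
\]
Such a $y$ is determined modulo $4p$.

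Next I would verify that $\gcd(y, 4p) = 1$. Since $y \equiv 3 \pmod{4}$, $y$ is odd, so $\gcd(y,4)=1$. Since $y \equiv x \pmod{p}$ and by hypothesis $p \nmid x$, we have $\gcd(y,p)=1$. Hence $\gcd(y, 4p)=1$, and Dirichlet's theorem on primes in arithmetic progressions supplies infinitely many prime numbers $q$ with $q \equiv y \pmod{4p}$. Any such $q$ satisfies $q \equiv x \pmod p$ and $q \equiv 3 \pmod 4$, and is in particular odd (as $q \equiv 3 \pmod 4$), so any one of these primes does the job.

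There is no real obstacle; the only thing to be careful about is the coprimality of the residue class to the modulus, which is the standing hypothesis of Dirichlet's theorem, and which follows immediately from the assumption $p \nmid x$ together with the choice $y \equiv 3 \pmod 4$.
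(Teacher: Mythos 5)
Your proof is correct and follows essentially the same route as the paper: apply the Chinese remainder theorem to produce a residue class modulo $4p$ and then invoke Dirichlet's theorem on primes in arithmetic progressions. The only difference is that you explicitly verify the coprimality of the residue class to the modulus $4p$, which the paper leaves implicit; this is a welcome bit of extra care but not a substantive departure.
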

\begin{proof}
 By the Chinese remainder theorem, there exists an integer $a\in\Z$ satisfying $a \equiv x \pmod p$ and $a \equiv 3 \pmod 4$. By Dirichlet's theorem on arithmetic progressions, there exists a prime number $q$ satisfying $q \equiv a \pmod {4p}$. The prime number $q$ is odd, and satisfies $q\equiv x \pmod p$ and $q\equiv3 \pmod 4$.\end{proof}
 
Let $U=\Z{e} \oplus \Z{f}$ be the hyperbolic plane as in Section \ref{good}. We define the \textit{$K3$ lattice}
$$\Lambda_{K3}:={(-E_8)}^{\oplus{2}}\oplus{U}^{\oplus{3}},$$
which is an even  unimodular lattice of signature $(3, 19)$ over $\Z$; see \cite[Lemma 17]{Taelman}.

\begin{lem}\label{latticeCM}
Let $m$ be an integer with $6\leq{m}\leq10$, and $F$ a $CM$ field of degree $[F:\Q]=2m$ with maximal totally real subfield $F_0$. There exist an even lattice $N$ of rank $22-2m$ and signature $(1, 21-2m)$, and a primitive embedding 
$$N \hookrightarrow \Lambda_{K3}$$
satisfying the following conditions:
\begin{itemize}
\item For the orthogonal complement $T:=N^{\perp}\subset \Lambda_{K3}$, the quadratic space $(T_{\Q}, \langle, \rangle)$ is isomorphic to $(F, q_{\lambda})$ for some ${\lambda}\in{F_0}$.
\item If $6 \leq m \leq 9$, or $m=10$ and ${\rm{disc}}(F)$ is a square, there exists an embedding $U \hookrightarrow N$.
\item If $m=10$ and ${\rm{disc}}(F)$ is not a square, there exists an element $x \in N$ with $(x)^2=2$ and no element $y \in N$ satisfies $(y)^2=-2$.
\end{itemize}
Here, an embedding $N \hookrightarrow \Lambda_{K3}$ is called {\em primitive} if the cokernel $\Lambda_{K3}/N$ is torsion-free. We put $T_{\Q}:=T\otimes_{\Z}{\Q}$ and $\langle, \rangle$ is the pairing induced from the pairing on $\Lambda_{K3}$.
\end{lem}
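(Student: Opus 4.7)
The plan is to follow Taelman's general strategy but refine the construction so that the candidate lattice $N$ carries the extra structure required by each case. The procedure has three stages: (i) pick a candidate even lattice $N$ of rank $22-2m$ and signature $(1,21-2m)$ with the required extra structure; (ii) apply Nikulin's theorem on primitive embeddings into the $K3$ lattice to realise $N$ as a primitive sublattice of $\Lambda_{K3}$; (iii) identify the orthogonal complement $T$ via Bayer-Fluckiger's Theorem \ref{CM-Bayer} as a lattice whose rational quadratic space is isomorphic to $(F,q_\lambda)$ for some $\lambda\in F_0^*$.

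For stage (i) I would split into the two structural cases of the lemma. When $6\leq m\leq 9$, or $m=10$ with $\disc(F)$ a square, take $N=U\oplus N_0$ with $N_0$ an even negative definite lattice of rank $20-2m$ (empty when $m=10$, so $N=U$). The hyperbolic summand produces the embedding $U\hookrightarrow N$ for free, while $N_0$ is left as a free parameter to be used at the final stage. When $m=10$ and $\disc(F)$ is not a square, $N$ has rank $2$ and signature $(1,1)$, and I would try $N$ with Gram matrix $\operatorname{diag}(2,-2k)$ for a positive integer $k$. Then $(1,0)$ has norm $2$, and the non-existence of a norm $-2$ vector in $N$ reduces to the non-solvability of $x^2-ky^2=-1$ in $\Z^2$, which is guaranteed as soon as $k$ has a prime factor $p\equiv 3\pmod 4$ (by reduction modulo $p$). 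Using Lemma \ref{progression}, one can produce such a $k$ whose squarefree part realises any prescribed class in $\Q^*/(\Q^*)^2$, while still having a prime factor congruent to $3$ mod $4$.

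For stage (ii), Nikulin's theorem on primitive embeddings of even lattices into even unimodular lattices produces a primitive embedding $N\hookrightarrow \Lambda_{K3}$ once the signature is compatible and the numerical inequality $\operatorname{rank}(\Lambda_{K3})-\operatorname{rank}(N)\geq \ell(q_N)+?$ holds. With $\operatorname{rank}(\Lambda_{K3})-\operatorname{rank}(N)=2m$ and $\ell(q_N)\leq 22-2m\leq 2m$ for $m\geq 6$, this is comfortable. The orthogonal complement $T=N^\perp\subset \Lambda_{K3}$ is then automatically an even lattice of signature $(2,2m-2)$, and since $\Lambda_{K3}$ is unimodular its discriminant form satisfies $q_T\simeq -q_N$.

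For stage (iii) I must verify the three hypotheses of Theorem \ref{CM-Bayer} for $(T_\Q,\langle,\rangle)$ and $F$. The signature $(2,2m-2)$ is even in both components. The determinant condition $\det(T_\Q)=(-1)^m\disc(F)$ in $\Q^*/(\Q^*)^2$ amounts to a constraint on $\disc(N)$, which is arranged by choosing the discriminant of $N_0$ (respectively the squarefree part of $k$) appropriately. The main obstacle is the hyperbolicity condition: at every prime $p$ such that all places of $F_0$ above $p$ split in $F$, the Hasse invariant of $T_{\Q_p}$ must equal that of $U_{\Q_p}^{\oplus m}$. At primes $p$ coprime to $\disc(T)=\pm\disc(N)$, the completion $T_{\Z_p}$ is unimodular and the Hasse invariant is forced to agree with the hyperbolic one. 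The dangerous primes are the finitely many dividing $\disc(N)$; at such primes the Hasse invariant depends delicately on the choice of $N_0$ (or $k$). I expect the bulk of the work to consist of a prime-by-prime adjustment of $N_0$ (or of the squarefree part of $k$), using Lemma \ref{Chebotarev} to locate a non-split prime for $F/F_0$ that carries a key sign and Lemma \ref{progression} to make the needed congruence adjustments while preserving all other conditions, so that the local Hasse invariants agree simultaneously at every split prime. This delicate local balancing is where I anticipate the genuine difficulty, and it is the reason the argument must be case-split according to the structure of $\disc(F)$.
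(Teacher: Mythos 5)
Your overall strategy coincides with the paper's: write down an explicit even lattice $N$ containing $U$ (resp.\ of the form $\langle 2\rangle\oplus\langle -2k\rangle$ in the $m=10$ non-square case), invoke Nikulin's theorem for the primitive embedding since $\mathrm{rank}(N)=22-2m\leq 10$, and then check the three hypotheses of Theorem \ref{CM-Bayer} for $T_{\Q}$, with the hyperbolicity condition as the crux. Your $m=10$ branch is essentially complete and even slightly different in detail from the paper (the paper takes $k=4n$ and rules out $(-2)$-vectors by reduction mod $4$, whereas you take $k=nq^2$ with $q\equiv 3\pmod 4$ and reduce mod $q$; both work, and your determinant bookkeeping $\det(T_\Q)=k=n=(-1)^{10}\disc(F)$ is correct).

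The genuine gap is that you never carry out the hyperbolicity verification for $6\leq m\leq 9$ — you explicitly defer "the bulk of the work" to an unspecified "prime-by-prime adjustment" of $N_0$. This is not a routine adjustment, and your framing of it slightly misdiagnoses the difficulty: the dangerous primes are not merely those dividing $\disc(N)$. Since $T_\Q$ has signature $(2,2m-2)$ while $U^{\oplus m}_\Q$ has signature $(m,m)$, the archimedean Hasse invariants differ precisely when $(m-1)(2m-3)\not\equiv m(m-1)/2 \pmod 2$, which happens for $m=7,8$; the product formula then \emph{forces} $w(T_{\Q_q})\neq w(U^{\oplus m}_{\Q_q})$ at some finite prime $q$, no matter how $N_0$ is chosen. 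The only way out is to engineer the discrepancy to sit exactly at a prime $p_1$ over which some place of $F_0$ does not split in $F$ (Lemma \ref{Chebotarev}), since such primes are exempt from the hyperbolicity condition. Concretely, the paper inserts the block $\langle -4p_1\rangle\oplus\langle -4p_2\rangle\oplus\langle -4p_1p_2\rangle$, whose contribution to $w(N_\Q)$ is the symbol $(-p_1,-p_2)$, and chooses $p_2\equiv 3\pmod 4$ by Dirichlet with $\bigl(\tfrac{p_2}{p_1}\bigr)$ prescribed so that quadratic reciprocity gives $(-p_1,-p_2)_q=0$ for every $q\neq p_1$ (including $q=2$ and $q=p_2$), leaving the unavoidable nontrivial invariant supported at $\{p_1,\infty\}$. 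For $m=6,9$ no such block is needed and a plain diagonal form $U\oplus\langle -4n\rangle\oplus\langle -4\rangle^{\oplus 19-2m}$ already matches $w(U^{\oplus m}_{\Q_q})$ at every prime. Without producing these explicit $N_0$ and the accompanying Hilbert-symbol computations, the lemma is not proved; the rest of your outline (Nikulin step, $q_T\simeq -q_N$, unimodularity at primes prime to $\disc(N)$ for odd $q$) is fine but was never the issue.
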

\begin{proof}
Since $F$ is a $CM$ field of degree $2m$, we can write ${\rm{disc}}(F)={(-1)^m}n\in \Z$ for some positive integer $n\geq1$. By Lemma \ref{Chebotarev}, there exists an odd prime number $p_1$ such that there exists a place of $F_0$ above $p_1$ which does not split in $F$. By Lemma \ref{progression}, there exists an odd prime number ${p_2}\neq{p_1}$ such that $p_2 \equiv 3 \pmod 4$ and
$$\bigg( \frac{p_2}{p_1} \bigg)=
\begin{cases}
1 &\quad  ({p_1}\equiv3\ ({\rm{mod}}\ 4))\\
-1 &\quad  ({p_1}\equiv1\ ({\rm{mod}}\ 4)),\\
\end{cases}
$$
where $\displaystyle \bigg( \frac{p_2}{p_1} \bigg)$ is the Legendre symbol.

Let us consider the following lattice $N$. When $6 \leq m \leq 9$, we put
$$N:=
\begin{cases}
U \oplus\langle{-4n}\rangle\oplus{\langle{-4}\rangle}^{\oplus{19-2m}} & (m=6, 9)\\
U \oplus\langle{-4n}\rangle\oplus{{\langle{-4}\rangle}^{\oplus2}}\oplus\langle{-4p_1}\rangle\oplus\langle{-4p_2}\rangle\oplus\langle{-4{p_1}{p_2}}\rangle & (m=7)\\
U \oplus\langle{-4n}\rangle\oplus\langle{-4p_1}\rangle\oplus\langle{-4p_2}\rangle\oplus\langle{-4{p_1}{p_2}}\rangle & (m=8).\\
\end{cases}
$$
When $m=10$ and $\disc(F)$ is a square, we put
$$N:=U.$$
When $m=10$ and $\disc(F)$ is not a square, we put
$$N:=\langle{2}\rangle\oplus\langle{-8n}\rangle.$$

The lattice $N$ is an even lattice of rank $22-2m$ and signature $(1, {21-2m})$. Since $22-2m\leq10$, we have a primitive embedding
$$N \hookrightarrow \Lambda_{K3}$$
by Nikulin's theorem \cite[Theorem 1.14.4]{Nikulin}, \cite[Chapter 14, Corollary 3.1]{Huybrechts}. Let $$T:=N^{\perp} \subset \Lambda_{K3}$$ be the orthogonal complement of $N$ in $\Lambda_{K3}$.

For the $K3$ lattice $\Lambda_{K3}$, we have: 
\begin{align*}
{\rm{sign}}(\Lambda_{K3, \Q})&=(3,\ 19),\\
{\rm{det}}(\Lambda_{K3, \Q})&=-1\in{{{\Q}^*}/{({{\Q}^*}^2})},\\
w(\Lambda_{K3, \Q})&=(-1,\ -1) \in \Br(\Q)[2].
\end{align*}
Hence we have:
\begin{align*}
{\rm{sign}}{(T_{\Q})}&=(2, 2m-2),\\
{\rm{det}}(T_{\Q})&=-{\rm{det}}(N_{\Q})=n\in{{{\Q}^*}/{({{\Q}^*}^2})},\\
w(T_{\Q})&=(-1, -1)+w(N_{\Q})+(-n, n)=(-1, -1)+w(N_{\Q})\in \Br(\Q)[2].
\end{align*}
See \cite[Lemma 14]{Taelman} for the calculation of these invariants. Here $(-n, n)\in \Br(\Q)[2]$ is trivial since the equation $-nX^2+nY^2=Z^2$ has a non-trivial solution such as $(X, Y, Z)=(1, 1, 0)$.

We shall show that the hyperbolicity condition (see Section \ref{lattice}) holds for $F$ and $T_{\Q}$. Namely, for any prime number $q$ such that all the places in $F_0$ above $q$ split in $F$, we shall show
$$w(U^{\oplus{m}}_{\Q_q})=w(T_{\Q_q}) \in \Br(\Q_q)[2] \simeq \Z/{2\Z}.$$
Note that we have $$w(U^{\oplus{m}}_{\Q_q})=((-1)^{m(m-1)/2}, -1)_q.$$
\begin{itemize}
\item When $m=6, 9$, we have 
$$N_{\Q}\simeq \langle{1}\rangle\oplus{{\langle{-1}\rangle}^{\oplus20-2m}}\oplus\langle{-n}\rangle.$$
Hence we have 
$$w(T_{\Q_q})=
\begin{cases}
(-1, -1)_q & (m=6)\\
0 & (m=9).
\end{cases}
$$ 
Hence we have $w(U^{\oplus{m}}_{\Q_q})=w(T_{\Q_q})$ for any prime number $q$.

\item When $m=7, 8$, we have
$$N_{\Q}\simeq
\begin{cases}
 \langle{1}\rangle\oplus{{\langle{-1}\rangle}^{\oplus3}}\oplus\langle{-n}\rangle\oplus\langle{-p_1}\rangle\oplus\langle{-p_2}\rangle\oplus\langle{-p_1p_2}\rangle & (m=7)\\ 
 \langle{1}\rangle\oplus{{\langle{-1}\rangle}}\oplus\langle{-n}\rangle\oplus\langle{-p_1}\rangle\oplus\langle{-p_2}\rangle\oplus\langle{-p_1p_2}\rangle& (m=8). 
\end{cases}
$$
Hence we have
$$w(T_{\Q_q})=
\begin{cases} 
(-1, -1)_{q}+(-p_1, -p_2)_{q} & (m=7)\\
(-p_1, -p_2)_{q} & (m=8).
\end{cases}$$
Let $q$ be a prime number such that all the places in $F_0$ above $q$ split in $F$. Then we have $q \neq p_1$. By the above calculation, it is enough to show $(-p_1, -p_2)_{q}=0$ for any prime number $q \neq p_1$. When $q \neq 2, p_1, p_2$, we have $(-p_1, -p_2)_{q}=0$. When $q=2$, since $-p_2 \equiv 1 \pmod{4}$, we have $(-p_1, -p_2)_{2}=0$; see \cite[Chapter III, Theorem 1]{Serre}. When $q=p_2$, it is enough to prove 
$$\bigg( \frac{-p_1}{p_2} \bigg)=1.$$
See \cite[Chapter III, Theorem 1]{Serre}. By the Quadratic Reciprocity Law \cite[Chapter I, Theorem 6]{Serre}, we have 
$$\bigg(\frac{-p_1}{p_2}\bigg)=\bigg(\frac{-1}{p_2}\bigg)\bigg(\frac{p_1}{p_2}\bigg)=-(-1)^{(p_1-1)(p_2-1)/4}\bigg(\frac{p_2}{p_1}\bigg)=1.$$

\item When $m=10$ and ${\rm{disc}}(F)$ is a square, we have
$$N_{\Q}\simeq \langle{1}\rangle\oplus{\langle{-1}\rangle}.$$ 
Hence we have
$$w(T_{\Q_q})=(-1, -1)_{q}.$$ 
Therefore we have $w(U^{\oplus{m}}_{\Q_q})=w(T_{\Q_q})$ for any prime number $q$.

\item When $m=10$ and ${\rm{disc}}(F)$ is not a square, we have
$$N_{\Q}\simeq \langle{2}\rangle\oplus{\langle{-2n}\rangle}.$$ 
Hence we have
$$w(T_{\Q_q})=(-1, -1)_{q}+(2, -2n)_{q}.$$
Let $q$ be a prime number such that all the places in $F_0$ above $q$ split in $F$. Then we have ${\rm{disc}}(F)=1\in{{{\Q}^*_q}}/({{{\Q}^{*2}_q}}).$ Since $\disc(F)=(-1)^{10}n=n$, we have $(2, -2n)_{q}=(2, -2)=0.$ Hence we have $w(U^{\oplus{10}}_{\Q_q})=(-1, -1)_q=w(T_{\Q_q}).$ 
\end{itemize}

By Theorem \ref{CM-Bayer}, the quadratic space $(T_{\Q}, \langle, \rangle)$ is isomorphic to $(F, q_{\lambda})$ for some ${\lambda}\in{F^*_0}$.
If $m=10$ and ${\rm{disc}}(F)$ is not a square, it is easy to see that no element $y\in{N}$ satisfies $(y, y)=-2$. Indeed, if there exist $a, b\in\Z$ such that
$$2a^2-8nb^2=-2,$$
then $a^2+1$ is divisible by $4$, which is absurd. 
\end{proof}

Now, we shall show the main results of this section. Compare with Taelman's construction in the proof of \cite[Theorem 4]{Taelman}.
 
\begin{prop}\label{degree2}
Let $m$ be an integer with $1\leq{m}\leq10$, and $F$ a $CM$ field of degree $[F:\Q]=2m$. Then there exists a projective $K3$ surface $Z$ over $\C$ with $CM$ by $F$ satisfying the following conditions: 
\begin{itemize}
\item $\rho(Z)=22-2m$.
\item If $1\leq{m}\leq9$, or $m=10$ and ${\rm{disc}}(F)$ is a square, the $K3$ surface $Z$ admits an elliptic fibration with a section.
\item If $m=10$ and ${\rm{disc}}(F)$ is not a square, the $K3$ surface $Z$ admits an ample line bundle of degree $2$.
\end{itemize}
\end{prop}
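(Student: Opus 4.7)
The plan is to realize $Z$ as the complex K3 surface attached to a period inside the lattice $T = N^\perp \subset \Lambda_{K3}$ from Lemma \ref{latticeCM}, and to read off the elliptic fibration (or the degree-$2$ polarization) directly from $N$. The argument splits according to the Picard number.

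For $1 \leq m \leq 5$, the Picard number $22 - 2m$ lies in $[12, 20]$, so it suffices to produce \emph{any} projective complex K3 surface with $CM$ by $F$ and then apply Proposition \ref{12}; this is exactly Taelman's construction in \cite[Theorem 4]{Taelman}. For $6 \leq m \leq 10$, I would take the primitive embedding $N \hookrightarrow \Lambda_{K3}$ supplied by Lemma \ref{latticeCM} and transport the $F$-action to $T_\Q$ via $(T_\Q, \langle, \rangle) \simeq (F, q_\lambda)$. Decomposing into $F$-eigenspaces
\begin{equation*}
T_\C \;=\; F \otimes_\Q \C \;=\; \bigoplus_{\tau : F \hookrightarrow \C} \C_\tau,
\end{equation*}
the $F$-equivariance of $q_\lambda$ forces it to pair $\C_\tau$ with $\C_{\bar\tau}$ by the factor $\tau(\lambda)$ and to vanish on the remaining pairs. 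Since $(F, q_\lambda)$ has signature $(2, 2m-2)$, exactly one real place $\tau_0$ of $F_0$ satisfies $\tau_0(\lambda) > 0$; choose a complex extension $\phi$ of $\tau_0$ and declare $T^{2,0} := \C_\phi$, $T^{0,2} := \C_{\bar\phi}$, and $T^{1,1} := \bigoplus_{\tau \neq \phi, \bar\phi} \C_\tau$. The Riemann bilinear relations are immediate: $T^{2,0}$ is isotropic since the pairing vanishes on $\C_\phi \times \C_\phi$, and for $0 \neq v \in T^{2,0}$ a direct computation gives $q_\lambda(v, \bar v) = \phi(\lambda) |v|^2 > 0$.

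Extend the Hodge structure to all of $\Lambda_{K3}$ by placing $N_\C$ in type $(1,1)$, and invoke the surjectivity of the period map to obtain a complex K3 surface $X$ realizing it. Because no nonzero element of $T_\Q = F$ can vanish at $\phi$, one has $T^{1,1} \cap T_\Q = 0$, and since $N$ is primitive in $\Lambda_{K3}$ this forces $\Pic(X) = N$ and $T_X = T$. Hence $\rho(X) = 22 - 2m$, and $F \hookrightarrow E_X$ together with $\dim_F T_\Q = 1$ shows that $X$ has $CM$ by $F$. Projectivity is automatic since $\Pic(X) = N$ has signature $(1, 21 - 2m)$ and therefore contains classes of positive square.

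The geometric enhancements are then read off from $N$: when $6 \leq m \leq 9$, or $m = 10$ with $\disc(F)$ a square, Lemma \ref{latticeCM} gives an embedding $U \hookrightarrow N = \Pic(X)$ and Proposition \ref{elliptic} yields the elliptic fibration with a section; in the remaining case, the class $x \in N$ with $(x)^2 = 2$ is algebraic of positive square, and since $N$ contains no $(-2)$-classes there are no walls inside its positive cone, so either $x$ or $-x$ is an ample class of degree $2$. The main obstacle is the Hodge-theoretic bookkeeping --- verifying the Riemann relations for the chosen $\phi$, confirming that the period is generic enough so that $\Pic(X)$ equals $N$ and not something strictly larger, and, in the degree-$2$ case, promoting the square-$2$ class to an honest ample class; it is precisely the absence of $(-2)$-vectors in $N$ guaranteed by the last clause of Lemma \ref{latticeCM} that removes the chamber-structure obstruction in that final case.
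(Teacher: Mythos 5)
Your proposal is correct and follows essentially the same route as the paper: the case $1\leq m\leq 5$ via Taelman's construction plus Proposition \ref{12}, and for $6\leq m\leq 10$ the eigenspace decomposition of $F\otimes_\Q\C$ with the distinguished embedding where $\lambda$ is positive, the surjectivity of the period map, the identification $\Pic(Z)\simeq N$ from primitivity, and the lattice-theoretic clauses of Lemma \ref{latticeCM} to produce the elliptic fibration or the degree-$2$ ample class. The only differences are notational (weight-$2$ versus the paper's weight-$0$ Hodge conventions) and that you spell out the Riemann relations, which the paper leaves implicit.
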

\begin{proof}
If $1\leq{m}\leq5$, by \cite[Theorem 4]{Taelman}, there exists a projective $K3$ surface $Z$ over $\C$ with $CM$ by $F.$ Since $\rho(Z)=22-2m\geq12$, by Proposition \ref{12}, the $K3$ surface $Z$ admits an elliptic fibration with a section.

If $6\leq m \leq 10$, by Lemma \ref{latticeCM}, there exist an even lattice $N$ of rank $22-2m$ and signature $(1, 21-2m)$, an element ${\lambda}\in{F^*_0}$, and a primitive embedding 
$$N \hookrightarrow \Lambda_{K3}$$
satisfying $(T_{\Q}, \langle, \rangle)\simeq (F, q_{\lambda})$, where we put $T:=N^{\perp} \subset \Lambda_{K3}$, $T_{\Q}:=T\otimes_{\Z}{\Q}$, and $\langle, \rangle$ is the pairing induced from the pairing on $\Lambda_{K3}$.
Following Taelman's arguments \cite[Theorem 4]{Taelman}, we shall show that there exists a projective $K3$ surface $Z$ over $\C$ with $CM$ by $F$ satisfying $$N\simeq\Pic({Z}).$$
We fix an isomorphism $(T_{\Q}, \langle, \rangle)\simeq (F, q_{\lambda})$.
Take an embedding $\epsilon \colon F \hookrightarrow \C$ with $\epsilon(\lambda)>0$.
Such an embedding exists because the signature of $T$ is $(2, 2m-2)$.
Let $\overline{\epsilon}$ be the complex conjugate of $\epsilon$.
Then we have a natural decomposition
\[
F \otimes_{\Q}\C \simeq \C_{\epsilon} \times \C_{\overline{\epsilon}} \times \prod_{f \neq \epsilon, \overline{\epsilon}} \C_f,
\]
where, for an embedding $f \colon F \hookrightarrow \C$, we denote $\C$ by $\C_f$ if we consider $\C$ as an $F$-algebra by $f$.
This decomposition gives a $\Z$-Hodge structure of weight $0$ on $\Lambda_{K3}$ such that
\[
\begin{cases}
	\Lambda_{K3}^{1, -1}=\C_{\epsilon}, \\
	\Lambda_{K3}^{-1, 1}=\C_{\overline{\epsilon}}, \\
	\Lambda_{K3}^{0, 0}=N_{\C} \oplus \prod_{f \neq \epsilon, \overline{\epsilon}} \C_f.
\end{cases}
\]
By the surjectivity of the period map for $K3$ surfaces over $\C$ \cite[Theorem 1]{Todorov},
there is a complex analytic $K3$ surface $Z$ over $\C$ and a Hodge isometry
\[
\Lambda_{K3} \simeq H^2(Z, \Z(1)).
\]
See also \cite[Chapter 6, Remark 3.3]{Huybrechts}.
Since $F \cap \prod_{f \neq \epsilon, \overline{\epsilon}} \C_f=0$, a natural embedding
\[
N \hookrightarrow \Lambda_{K3}^{0, 0} \cap \Lambda_{K3}
\]
is an isomorphism after tensoring with $\otimes_{\Z}\Q$.
Since $N \hookrightarrow \Lambda_{K3}
$ is a primitive embedding, it follows that
\[
N \simeq \Lambda_{K3}^{0, 0} \cap \Lambda_{K3}.
\]
By \cite[Chapter IV, Theorem 6.2]{BHPV}, the complex analytic $K3$ surface $Z$ is projective.
Now, the claim follows from the Lefschetz theorem on $(1, 1)$-classes.

 Moreover, if $6\leq m \leq 9$, or $m=10$ and ${\rm{disc}}(F)$ is a square, there exists an embedding 
$$U\hookrightarrow N \simeq \Pic(Z).$$
By Proposition \ref{elliptic}, the $K3$ surface $Z$ admits an elliptic fibration with a section.

If $m=10$ and ${\rm{disc}}(F)$ is not a square, no element $y\in{N} \simeq \Pic(X)$ satisfies $(y)^2=-2$, hence the $K3$ surface $Z$ does not contain smooth rational curves. Therefore, any line bundle on $Z$ in the positive cone of $\Pic(Z)_{\R}$ is ample; see \cite[Chapter 2, Proposition 1.4]{Huybrechts}. In particular, for any element $x \in N \simeq \Pic(Z)$ with $(x)^2=2$, we see that $x$ or $-x$ corresponds to an ample line bundle of degree $2$. 
 \end{proof}
The statement of Proposition \ref{degree2} on the existence of an elliptic fibration with a section on a $K3$ surface with $CM$ is optimal. Indeed, we have the following proposition.
\begin{prop}\label{degree20}
Let $F$ be a $CM$ field with $[F:\Q]=20$. Then the following assertions are equivalent: 
\begin{itemize}
\item $\disc(F)$ is a square.
\item There exists a projective $K3$ surface $Z$ over $\C$ with $CM$ by $F$ which admits an elliptic fibration with a section.
\end{itemize}
\end{prop}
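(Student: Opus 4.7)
For the direction ``$\disc(F)$ is a square $\Rightarrow$ existence'' nothing new is needed: this is precisely the $m=10$, $\disc(F)$ square case of Proposition \ref{degree2}. My plan is therefore to concentrate on the converse.

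Suppose we are given a projective $K3$ surface $Z$ over $\C$ with $CM$ by $F$ that admits an elliptic fibration with a section. The first step is the rank computation: since $\dim_F T_Z = 1$ and $[F:\Q]=20$, the transcendental lattice $T_Z$ has $\Q$-rank $20$, so $\rho(Z)=22-20=2$. By Proposition \ref{elliptic}, there is a lattice embedding $U \hookrightarrow \Pic(Z)$ of rank-$2$ lattices, which must be of finite index. The relation $\disc(U) = [\Pic(Z):U]^2\,\disc(\Pic(Z))$, combined with $\disc(U)=-1$ and $\disc(\Pic(Z))\in\Z$, forces the index to be $1$. Hence $\Pic(Z)=U$.

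Next I would exploit that $U$ is unimodular to split the embedding into $\Lambda_{K3}$: we obtain an orthogonal decomposition $\Lambda_{K3}\simeq U\oplus T_Z$, and comparing discriminants gives $\disc(T_Z)=\disc(\Lambda_{K3})/\disc(U)=(-1)/(-1)=1$. By the standard fact about $CM$ $K3$ surfaces that $(T_{Z,\Q},\langle,\rangle)\simeq (F,q_\lambda)$ for some $\lambda\in F_0^*$ (the very ingredient used in the proof of Proposition \ref{degree2}), Theorem \ref{CM-Bayer} yields
\[
\disc(F) \;=\; (-1)^{10}\disc(F) \;=\; \det(T_{Z,\Q}) \;=\; 1 \quad \text{in } \Q^*/(\Q^*)^2,
\]
which is exactly the statement that $\disc(F)$ is a square.

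I do not anticipate a serious obstacle: once the rank-$2$ observation is in place, the argument is essentially discriminant bookkeeping driven by the unimodularity of $U$ and of $\Lambda_{K3}$. The only point that may deserve a line of justification is the identification $T_{Z,\Q}\simeq (F,q_\lambda)$, which follows from $Z$ being a $CM$ $K3$ surface with $\dim_F T_Z=1$ and from Zarhin's analysis of the endomorphism algebra of the Hodge structure on $T_Z$; this identification is exactly the input that makes Taelman's and the author's construction of $CM$ $K3$ surfaces via Theorem \ref{CM-Bayer} possible in the first place.
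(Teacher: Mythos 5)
Your proposal is correct and follows essentially the same route as the paper: the forward direction is quoted from Proposition \ref{degree2}, and the converse proceeds by computing $\rho(Z)=2$, identifying $\Pic(Z)\simeq U$, comparing determinants in the orthogonal decomposition $\Lambda_{K3,\Q}\simeq \Pic(Z)_\Q\oplus T_{Z,\Q}$, and invoking $(T_{Z,\Q},\langle,\rangle)\simeq(F,q_\lambda)$ together with the identity $\det(F,q_\lambda)=(-1)^{10}\disc(F)$. The only difference is cosmetic: you justify $\Pic(Z)\simeq U$ explicitly via the index-squared discriminant formula, where the paper simply asserts it, and you cite Theorem \ref{CM-Bayer} where the paper cites Bayer-Fluckiger's Lemma 1.3.2 for the same determinant computation.
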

\begin{proof}
If $\disc(F)$ is a square, by Proposition \ref{degree2}, there exists a projective $K3$ surface $Z$ over $\C$ with $CM$ by $F$ which admits an elliptic fibration with a section.

Next, we assume that there exists a projective $K3$ surface $Z$ over $\C$ with $CM$ by $F$ which admits an elliptic fibration with a section. Let $F_0$ be the maximal totally real subfield of $F$. It is well-known that the quadratic space $(T_{Z}, \langle, \rangle)$ is isomorphic to $(F, q_{\lambda})$ for some element $\lambda \in F^*_0$; see \cite[Proposition 1.3.10]{Chen} for example.
Here, the lattice $T_{Z}$ is the transcendental part of $H^{2}(Z, \Q(1))$ and $\langle, \rangle$ is the intersection pairing; see Section \ref{good}. We briefly recall the argument. Since ${\rm{dim}}_{F}(T_{Z})=1$, there exists an element $\alpha \in T_{Z}$ with $T_{Z}\simeq F\alpha$. Consider the bilinear form
$$\Phi \colon T_{Z} \times T_{Z}  \longrightarrow F$$
defined by 
$$\langle ex, y \rangle = {\rm{Tr}}_{F/{\Q}}(e\Phi(x, y)),$$
for all $e\in F$ and $x, y \in T_{Z}$.
We put $\lambda:=\Phi(\alpha, \alpha)$. Then we have 
$$\langle a\alpha, b\alpha \rangle=\langle ai(b)\alpha, \alpha \rangle={\rm{Tr}}_{F/\Q}(ai(b)\lambda)=q_{\lambda}(a, b)$$
for any $a, b \in F$; see \cite[Section 1.5]{Zarhin} for the first equality.
We also have 
$$\langle a\alpha, b\alpha \rangle=\langle b\alpha, a\alpha \rangle={\rm{Tr}}_{F/\Q}(bi(a)\lambda)={\rm{Tr}}_{F/\Q}(ai(b)i(\lambda)).$$
Hence $\lambda \in F^*_0$, and we have $(T_{Z}, \langle, \rangle)\simeq (F, q_{\lambda})$. Since $\rho(Z)=2$ and $Z$ has an elliptic fibration with a section, we have $\Pic(Z) \simeq U$. In particular, we have ${\rm{det}}(\Pic(Z)_{\Q})=-1$. 
Since ${\rm{det}}(\Lambda_{K3, \Q})=-1 \in {{{\Q}^*}/{({{\Q}^*}^2}})$ and $\Lambda_{K3, \Q} \simeq \Pic(Z)_{\Q}\oplus T_{Z}$, we have $$\disc(F)=(-1)^{10}{\rm{det}}(T_{Z})=1 \in {{{\Q}^*}/{({{\Q}^*}^2}})$$
by \cite[Lemma 1.3.2]{Bayer}.
\end{proof}
\section{Proof of Theorem \ref{uncondition}}\label{main}
In this section, we shall prove Theorem \ref{uncondition}.
First, we prove the following lemma. 
\begin{lem}\label{disc}
For a finite extension of $CM$ fields $F/E$ with $[F:E]=e$, we have 
$${\rm{disc}}(F)=\disc(E)^e \in {{{\Q}^*}/{({{\Q}^*}^2})}.$$
\end{lem}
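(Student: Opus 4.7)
The plan is to deduce the identity from a tower/transfer formula for discriminants combined with a descent to the maximal totally real subfields. The key observation is that, for an extension of $CM$ fields, the relative discriminant $\disc(F/E) \in E^{*}/(E^{*})^{2}$ is representable by an element of $E_{0}^{*}$, and then the factor $[E:E_{0}]=2$ forces its norm down to $\Q$ to be a square.

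First I would establish the identity
$$\disc(F) \equiv N_{E/\Q}(\disc(F/E)) \cdot \disc(E)^{e} \pmod{(\Q^{*})^{2}},$$
where $\disc(F/E) := \det_{E}(\mathrm{Tr}_{F/E}(x_{i}x_{j})) \in E^{*}/(E^{*})^{2}$ for any $E$-basis $\{x_{i}\}$ of $F$ (and we use that $\disc(K)$ mod squares equals $\det_{\Q}(\mathrm{Tr}_{K/\Q}(x_{i}x_{j}))$ for any $\Q$-basis of any number field $K$). This is the Scharlau transfer identity applied to $\mathrm{Tr}_{E/\Q}$: since $\mathrm{Tr}_{F/\Q}(x^{2}) = \mathrm{Tr}_{E/\Q}(\mathrm{Tr}_{F/E}(x^{2}))$, writing out the Gram matrix in a product basis $\{e_{i}f_{j}\}$ (with $\{e_{i}\}$ a $\Q$-basis of $E$ and $\{f_{j}\}$ an $E$-basis of $F$) and computing the resulting block-matrix determinant yields the stated relation modulo squares.

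Next I would descend $\disc(F/E)$ to the totally real side. Let $E_{0} \subseteq E$ and $F_{0} \subseteq F$ be the maximal totally real subfields. Since complex conjugation on a $CM$ field is uniquely characterized by $\sigma \circ \iota = \overline{\sigma}$ for all embeddings $\sigma$, the canonical involution of $F$ restricts to that of $E$; hence $E_{0} \subseteq F_{0}$. Assume $e > 1$, so $E_{0} \subsetneq F_{0}$ and therefore $F_{0} \not\subseteq E$ (as $F_{0} \cap E \subseteq E_{0}$). Because $E/E_{0}$ is Galois of degree $2$, the fields $E$ and $F_{0}$ are linearly disjoint over $E_{0}$, giving $[EF_{0}:E_{0}]=2e=[F:E_{0}]$ and hence $F = EF_{0} \simeq E\otimes_{E_{0}}F_{0}$. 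Choosing an $E_{0}$-basis $\{y_{j}\}$ of $F_{0}$ (which is simultaneously an $E$-basis of $F$), the tensor decomposition yields $\mathrm{Tr}_{F/E}|_{F_{0}} = \mathrm{Tr}_{F_{0}/E_{0}}$, so the Gram matrix $(\mathrm{Tr}_{F/E}(y_{j}y_{l}))$ has entries in $E_{0}$ and determinant $\disc(F_{0}/E_{0}) \in E_{0}^{*}$; thus $\disc(F/E)$ is representable in $E^{*}/(E^{*})^{2}$ by this element of $E_{0}^{*}$.

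Finally, for any $\alpha \in E_{0}^{*}$, since $[E:E_{0}]=2$ one has
$$N_{E/\Q}(\alpha) = N_{E_{0}/\Q}(N_{E/E_{0}}(\alpha)) = N_{E_{0}/\Q}(\alpha^{2}) = N_{E_{0}/\Q}(\alpha)^{2},$$
which is a square in $\Q^{*}$. Combined with the first displayed identity this collapses to $\disc(F) \equiv \disc(E)^{e} \pmod{(\Q^{*})^{2}}$; the case $e=1$ is vacuous. I expect the main subtlety to be verifying the Scharlau transfer identity with careful attention to signs, since we work in $\Q^{*}/(\Q^{*})^{2}$ rather than with absolute values of discriminant ideals; everything else is a short structural manipulation internal to the $CM$-field tower $F_0 E/E/E_0$.
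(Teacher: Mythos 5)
Your proof is correct, but it follows a genuinely different route from the paper's. The paper first disposes of the sign separately (the signs of $\disc(E)$ and $\disc(F)$ are $(-1)^m$ and $(-1)^{em}$), then works entirely with discriminant \emph{ideals} modulo squares: using the tower $F/F_0/\Q$ and $[F:F_0]=2$ it gets $\disc(F)\Z = \mathrm{N}_{F_0/\Q}(\mathfrak{d}_{F/F_0})$ in $I_\Q/I_\Q^2$, writes $E=E_0(\sqrt{\beta})$ so that $F=F_0(\sqrt{\beta})$ with the \emph{same} $\beta\in E_0$, identifies $\mathfrak{d}_{F/F_0}$ with $\beta\mathscr{O}_{F_0}$ modulo squares, and concludes from $\mathrm{N}_{F_0/\Q}(\beta)=\mathrm{N}_{E_0/\Q}(\beta)^e$. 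You instead transfer along the tower $F/E/\Q$ and stay in $\Q^*/(\Q^*)^2$ throughout, using the Scharlau determinant formula $\disc(F)=\disc(E)^e\,\mathrm{N}_{E/\Q}(\disc(F/E))$ and killing the correction term by descending $\disc(F/E)$ to $E_0^*$ via $F\simeq E\otimes_{E_0}F_0$. Both arguments ultimately exploit the same structural fact --- that everything in sight is quadratic over a totally real subfield and $[F_0:E_0]=e$ --- but your packaging buys a uniform treatment of sign and ideal at once and avoids introducing $\sqrt{\beta}$, at the cost of invoking the transfer determinant formula; that formula is standard (it follows from $\det\bigl(\mathrm{Tr}_{E/\Q}(a e_i e_j)\bigr)=\mathrm{N}_{E/\Q}(a)\det\bigl(\mathrm{Tr}_{E/\Q}(e_i e_j)\bigr)$ applied to a diagonalization of the trace form of $F/E$, or see Conner--Perlis), but you should either cite it or include that one-line verification rather than leaving it as the flagged ``main subtlety.'' All the intermediate claims you make --- $E_0\subseteq F_0$, $E\cap F_0=E_0$, linear disjointness, $\mathrm{Tr}_{F/E}|_{F_0}=\mathrm{Tr}_{F_0/E_0}$, and $\mathrm{N}_{E/\Q}(\alpha)=\mathrm{N}_{E_0/\Q}(\alpha)^2$ for $\alpha\in E_0^*$ --- check out.
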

\begin{proof}
We put $[E:\Q]=2m$.
Then the sign of $\disc(E)$ is $(-1)^m$.
Similarly, the sign of $\disc(F)$ is $(-1)^{em}$.
Hence $\disc(F)$ and $\disc(E)^e$ have the same sign.
Therefore, to prove Lemma \ref{disc}, it suffices to prove that 
$\disc(F)\Z$ is equal to $\disc(E)^e\Z $ in the group of fractional ideals of $\Q$ modulo squares. 

For a number field $K$,
we denote the group of fractional ideals of $K$ by $I_K$.
For an extension of number fields $K_2/K_1$, we denote the relative discriminant of $K_2/K_1$ by $\mathfrak{d}_{K_2/K_1}$. 
Let $F_0$ (resp.\ $E_0$) be the maximal totally real subfield of $F$ (resp.\ $E$).
By \cite[Chapter 3, Proposition 8]{Serre2}, we have the following equalities in $I_{\Q}/I^2_{\Q}$
\begin{align*}
\disc(F)\Z &={\mathrm{N}}_{F_0/\Q}(\mathfrak{d}_{F/F_0}),\\
\disc(E)\Z &={\mathrm{N}}_{E_0/\Q}(\mathfrak{d}_{E/E_0}).
\end{align*} 
We put $E=E_0(\sqrt{\beta})$ for some $\beta \in E_0$.
Then we have $\mathfrak{d}_{E/E_0} = \beta \mathscr{O}_{E_0}$ in $I_{E_0}/I^2_{E_0}$; see \cite[Chapter 3, Proposition 5]{Serre2}. Here $\mathscr{O}_{E_0}$ is the ring of integers of $E_0$.
We have ${\mathrm{N}}_{E_0/\Q}(\mathfrak{d}_{E/E_0})={\mathrm{N}}_{E_0/\Q}(\beta)\Z$ in $I_{\Q}/I^2_{\Q}$. 
Similarly, since $F = F_0(\sqrt{\beta})$,
we have $\mathfrak{d}_{F/F_0} = \beta \mathscr{O}_{F_0}$ in $I_{F_0}/I^2_{F_0}$ and we have ${\rm{N}}_{F_0/\Q}(\mathfrak{d}_{F/F_0})={\rm{N}}_{E_0/\Q}(\beta)^e\Z$ in $I_{\Q}/I^2_{\Q}$.
Therefore, we have the following equality in $I_{\Q}/I^2_{\Q}$
$$ \disc(F) \Z = \disc(E)^e\Z.$$
\end{proof}
\begin{proof}[\bf{Proof of Theorem \ref{uncondition}}]
We briefly recall Taelman's argument; see \cite[Section 4]{Taelman} for details. Consider a polynomial $$L(T)=\prod^{2m}_{i=1}(1-{\gamma_i}T) \in{1+T\Q[T]}$$ of degree $2m$ satisfying Condition \ref{cond}. We put $E:={\Q}(\gamma_1)$. By Condition \ref{cond}, the number field $E$ is a $CM$ field which has a unique finite place $v$ satisfying $v(\gamma_1)<0$ above $p$. Take a finite extension $F/E$ of $CM$ fields such that $[F:{\mathbb{Q}}]=2m$ and $F$ has a unique finite place above $v$; see \cite[Lemma 23]{Taelman}. By Lemma \ref{disc}, $\disc(F)$ is a square if $\disc(E)$ is a square or $e$ is even.
 
Assume that at least one of the following conditions is satisfied:
\begin{itemize}
\item $p\geq5$ and $1\leq m \leq 9$.
\item $p\geq5$, $m=10$, and $\disc(E)$ is a square.
\item $p\geq5$, $m=10$, and $e$ is even.
\end{itemize}
Then, by Proposition \ref{degree2}, there exists a projective $K3$ surface $Z$ over $\C$ with $CM$ by $F$ which admits an elliptic fibration with a section.

Assume $p\geq7$, $m=10$, $\disc(E)$ is not a square, and $e$ is odd. By Proposition \ref{degree2}, there exists a projective $K3$ surface $Z$ over $\C$ with $CM$ by $F$ which admits an ample line bundle $\mathscr{L}$ of degree $2$.
 
 In any of these cases, by the theorem of Pjatecki{\u\i}-{\v{S}}apiro and {\v{S}}afarevi{\v{c}} \cite[Theorem 4]{Shafarevich2}, the $K3$ surface $Z$ is defined over a finite extension $L$ of $F$. By Corollary \ref{CM-reduction}, after replacing $L$ by a finite extension of it, there is an algebraic space $\mathscr{Z}$ which is proper and smooth over the valuation ring $\mathscr{O}_{L_{v'}}$, where $v'$ is a finite place of $L$ above $v$. (Here, we use the inequality $p>(\mathscr{L})^2+4=6$ when $p\geq7$, $m=10$, $\disc(E)$ is not a square, and $e$ is odd.) We denote the special fiber of $\mathscr{Z}$ by $\mathscr{Z}_{s}$, which is a $K3$ surface over the residue field $k(s)$. 
 
  Rizov also proved the main theorem of complex multiplication for $K3$ surfaces \cite[Corollary 3.9.2]{Rizov10}. Taelman used Rizov's results to calculate the zeta function of the reduction modulo $p$ of a $K3$ surface with $CM$ if it has good reduction; see the proof of \cite[Proposition 25]{Taelman}. As explained in \cite[Section 4]{Taelman}, it follows that there exists a finite extension $k'/k(s)$ such that the transcendental part of the $L$-function of $\mathscr{Z}_{s}\otimes_{k(s)}{k'}$ satisfies
$$L_{\mathrm{trc}}((\mathscr{Z}_{s}\otimes_{k(s)}{k'})/{k'}, T)=\prod^{2m}_{i=1}(1-{\gamma^{[k' : \F_{q}]}_i}T).$$

 The proof of Theorem \ref{uncondition} is complete.
 \end{proof}
\begin{rem}\label{can not weaken}
\rm{It seems difficult to weaken the assumption of Theorem \ref{uncondition} by our methods. Currently, in order to apply Matsumoto's good reduction criterion for $K3$ surfaces or its variants, we always need $p\geq5$; see Theorem \ref{criterion} and \cite{Matsumoto}, \cite{Liedtke-Matsumoto}. The main reason is, in Matsumoto's proof, Saito's construction of strictly semistable models \cite{Saito} and Kawamata's results on minimal models \cite{Kawamata}, \cite{Kawamata2} are crucially used, and both results require $p\geq5$. (See \cite[Section 3]{Kawamata2} for explanations why $p\geq5$ is necessary to use Kawamata's results.) When $p=5$, we cannot treat the remaining case (i.e.\ when $m=10$, $\disc(E)$ is not a square, and $e$ is odd) by our methods. Indeed, in this case, for any $CM$ field $F$ containing $E$ with $[F:E]=e$, the discriminant $\disc(F)$ is not a square by Lemma \ref{disc}. By Proposition \ref{degree20}, no $K3$ surface with $CM$ by $F$ admits an elliptic fibration with a section. Hence we cannot apply Theorem \ref{criterion} for such $K3$ surfaces. See also Remark \ref{counter}.}
\end{rem}

\section{Construction of K3 surfaces over finite fields with given geometric Picard number and height}\label{given rho and h}
In \cite{Artin74}, Artin proved that, for a $K3$ surface $X$ over an algebraically closed field of characteristic $p>0$, the Picard number $\rho(X)$ and the height $h(X)$ of the formal Brauer group satisfy the inequality $$\rho(X)\leq22-2h(X).$$

 In this section, as an application of Theorem \ref{uncondition}, we shall construct $K3$ surfaces with given geometric Picard number and height over finite fields of characteristic $p\geq5$.
 
\begin{lem}\label{poly}
Let $p$ be a prime number, and $h', m'$ integers with $1 \leq h' \leq  m' \leq 10$. Then there exists a polynomial $$L(T) \in 1+T\Q[T]$$ of degree $2m'$ satisfying Condition \ref{cond} for a power $q$ of $p$, $h=h'$, and $m=m'$.
\end{lem}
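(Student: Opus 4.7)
The strategy is to take $L(T)$ to be a power of the minimal polynomial over $\Q$ of a suitably chosen Weil-type element $\gamma$ lying in a CM field $E$ of degree $2m'$, whose $p$-adic behavior realizes the required Newton polygon.

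First, I would build a CM field $E$ of degree $2m'$ with maximal totally real subfield $E_0$ of degree $m'$, such that $p\mathcal{O}_{E_0}$ has a prime $\mathfrak{p}$ of residue degree $h'$ (unramified over $\Q_p$) that splits in $E$ as $\mathfrak{p}\mathcal{O}_E = \mathfrak{P}\bar{\mathfrak{P}}$, while every other prime of $E_0$ above $p$ is inert in $E$. The totally real $E_0$ with the desired decomposition of $p$ can be constructed by standard methods: for instance, realize $E_0$ as a subfield of a cyclotomic field so that the Frobenius at $p$ in the corresponding Galois group has orbit structure matching $(h', \text{further orbits summing to } m'-h')$, or alternatively take a monic integral polynomial of degree $m'$ irreducible over $\Q$ whose reduction modulo $p$ has an irreducible factor of degree $h'$, and arrange by a small perturbation that its roots are all real. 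Given $E_0$, weak approximation yields a totally positive $d \in E_0^\times$ with $-d$ a square modulo $\mathfrak{p}$ and a non-square modulo the remaining primes of $E_0$ above $p$; setting $E := E_0(\sqrt{-d})$ produces the required splitting behavior. This parallels the construction in \cite[Lemma 23]{Taelman}.

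Next, by finiteness of the class group of $E$, pick $N \geq 1$ with $\mathfrak{P}^N = (\beta)$ principal, and define $\gamma := \beta/\bar\beta \in E^\times$. Then $\gamma\bar\gamma = 1$, so $|\sigma(\gamma)| = 1$ for every complex embedding $\sigma$; moreover $\gamma$ is an $\mathcal{O}_E$-unit away from $p$; above $p$ it has valuation $+N$ at $\mathfrak{P}$, $-N$ at $\bar{\mathfrak{P}}$, and $0$ at the remaining primes (which are fixed by complex conjugation); and the principal divisor of $\gamma$ is nontrivial, so $\gamma$ is not a root of unity. Choosing $q := p^{Nh'}$ makes the $\nu_q$-valuations of the $2m'$ conjugates of $\gamma$ equal to $-1/h'$ with multiplicity $[E_{\bar{\mathfrak{P}}}:\Q_p] = h'$, to $+1/h'$ with multiplicity $h'$, and to $0$ with multiplicity $2m'-2h'$. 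Setting $Q(T)$ to be the minimal polynomial of $\gamma$ over $\Q$ and $L(T) := Q(T)^e$ with $e := 2m'/\deg Q$, the first five items of Condition~\ref{cond} follow directly. The last item holds because $Q_{<0}(T)$ is the $\Q_p$-factor of $Q$ corresponding to the unique prime of $\Q(\gamma)$ of negative slope (the restriction of $\bar{\mathfrak{P}}$), and hence is irreducible over $\Q_p$.

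The main obstacle is the first step: simultaneously arranging that $E_0$ be totally real of degree $m'$, that $p$ have a prime of residue degree exactly $h'$ in $E_0$, that this prime splits in $E$, and that no other prime of $E_0$ above $p$ splits in $E$. These delicate global/local compatibilities are handled by combining an explicit construction of $E_0$ (either inside a cyclotomic field or via polynomial factorizations modulo $p$) with weak approximation for the quadratic discriminant of $E/E_0$. The remaining steps—existence of $\beta$, the valuation computation, and the irreducibility of $Q_{<0}$—are then essentially formal consequences of the setup.
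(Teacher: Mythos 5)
Your construction is correct in outline, but it is a genuinely different route from the paper's proof of this lemma. The paper's argument is completely explicit and avoids algebraic number theory: it takes a product $F_0(T)$ of small integer polynomials ($T-1$, $T^2-i$, $T^3-3T+1$, $T^4-4T^2+1$) with $m'$ real roots in $(-2,2)$, perturbs it to $F(T)=F_0(T)+p^{-a}T^{m'-h'}$ with $a$ coprime to $h'$, and sets $L(T)=T^{m'}F(T+1/T)$; the Newton polygon of $L$ then has slopes $\pm a/h'$ and $0$ with the right multiplicities, coprimality of $a$ and $h'$ forces $L_{<0}$ to be irreducible over $\Q_p$, and irreducibility over $\Q$ follows from Kronecker's theorem once the roots of $F_0$ are arranged to avoid the finitely many values $t+1/t$ with $t$ a root of unity of low degree. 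This yields $e=1$, $q=p^a$, and, as a by-product, the totally real field that the paper reuses in its Lemma 6.2. Your approach instead manufactures a Weil-type element $\gamma=\beta/\bar\beta$ in a CM field $E$ of degree $2m'$ with prescribed splitting at $p$; this is essentially the method the paper itself employs in Lemma 6.2 and Remark 6.3 (and Taelman's Lemma 23), so it is certainly viable, and it is more conceptual and flexible. Two points in your sketch deserve firming up. First, the construction of $E_0$ (totally real of degree $m'$, irreducible over $\Q$, with an unramified prime of residue degree exactly $h'$ above $p$) is the least routine step: "a small perturbation" must simultaneously preserve the factorization over $\Q_p$ (via Krasner/Hensel), force total reality, and guarantee irreducibility over $\Q$ (e.g.\ by imposing a condition at an auxiliary prime); also, for $p=2$ the criterion "$-d$ a square modulo $\mathfrak{p}$" must be replaced by "$-d$ a square in the completion," though weak approximation still applies. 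Second, you never verify $\Q(\gamma)=E$, which your phrase "the $2m'$ conjugates of $\gamma$" tacitly assumes; the paper's Lemma 6.2 proves the analogous equality by a Drinfeld-style argument. In fact your fallback $e:=2m'/\deg Q$ does rescue the statement — if $K:=\Q(\gamma)\subsetneq E$ then $\bar{\mathfrak{P}}$ is the unique prime of $E$ over the unique negative-slope prime $\mathfrak{q}$ of $K$, so $e\cdot[K_{\mathfrak{q}}:\Q_p]=[E_{\bar{\mathfrak{P}}}:\Q_p]=h'$ and the slope multiplicities and the irreducibility of $Q_{<0}$ over $\Q_p$ still come out right — but this check should be made explicit rather than left implicit.
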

\begin{proof}
Let $S \subset \C$ be the set of roots of unity whose minimal polynomials over $\Q$ have degree less than or equal to $20$. We put 
$$S' := \lbrace\, x \in \C \mid x=t+1/t\ {\rm{for\ some}}\ t \in S\,\rbrace.$$

We can take a monic polynomial $$F(T)=T^{m'}+c_{m'-1}T^{m'-1}+\cdots+c_{1}T+c_0 \in \Q[T]$$ 
of degree $m'$ with roots $\alpha_1,\dotsc,\alpha_{m'}\in \overline{\Q} \subset \C$ such that all of the  following conditions are satisfied:
\begin{itemize}
\item $\alpha_1,\dotsc,\alpha_{m'}\in \R$.
\item $\vert\alpha_i\vert<2$.
\item $\alpha_i \notin S'$ for any $1 \leq i \leq m'$.
\item $F(T) \in \Z_{\ell}[T]$ for all prime numbers ${\ell}\neq{p}$. 
\item $\nu_p(c_{m'-h'})=-a$ for some positive integer $a\geq1$ which is coprime to $h'$.
\item $c_{i} \in \Z$ for any $i \neq m'-h'$.
\end{itemize}
For example, we can find such polynomials as follows. We put:
\begin{itemize}
\item $f_0(T):=T-1$,
\item $f_i(T):=T^2-i$ \quad $(i=1, 2, 3),$
\item $f_4(T):=T^3-3T+1,$
\item $f_5(T):=T^4-4T^2+1.$
\end{itemize}
As a product of $f_0,\dotsc, f_5$, we can find a monic  polynomial $F_0(T) \in \Z[T]$ of degree $m'$ which has $m'$ distinct real roots $\beta_1, \dotsc, \beta_{m'} \neq 0$ with $\vert\beta_i\vert<2$ for any $i$ (for example, take $F_0(T)$ as in Table $1$ below). 
\begin{center}
\begin{tabular}{|c|c|} \hline
\ $m'$ \ \    &   \  $F_0(T)$   \ \    \\ 
\hline
  1     &    $f_0(T)$  \\
  2    &   $ f_1(T)$\\
  3   &   $f_4(T)$ \\
  4    & $ f_5(T)$\\
  5   &   $f_1(T)f_4(T)$\\
  6    &   $f_1(T)f_5(T)$\\
  7    &   $ f_4(T)f_5(T)$\\
  8    &    $ f_1(T)f_2(T)f_5(T)$\\
  9   &   $ f_1(T)f_4(T)f_5(T)$\\
  10   &  $f_1(T)f_2(T)f_3(T)f_5(T)$\\ \hline
  \end{tabular}
  \\*[2mm]
  {\sc Table 1}.
  \end{center}
  Since $S'$ is a finite set, if we take a sufficiently large positive integer $a$ which is coprime to $h'$, the polynomial 
$$F(T)=F_0(T)+{p^{-a}}T^{m'-h'}$$
satisfies the above conditions.

Then the polynomial $$L(T)=T^{m'}F(T+1/T)$$ satisfies Condition \ref{cond}. The first three conditions can be checked easily. To check the last three conditions, it is enough to show that $L(T)$ is irreducible in $\Q[T]$ and that $L_{<0}(T)$ is irreducible in $\Q_p[T]$.

If we write $L(T)$ in the following form 
$$L(T)=T^{2m'}+d_{2m'-1}T^{2m'-1}+\cdots+d_{1}T+1,$$
we have
$$
\begin{cases}
\nu_{p}(d_i)\geq 0  &\quad \ (1\leq{i}\leq{h'-1},\ {2m'-h'+1}\leq{i}\leq{2m'-1}),\\
\nu_{p}(d_i)=-a &\quad \  (i=h', 2m'-h'),\\
\nu_{p}(d_i)\geq{-a}  &\quad  \  ({h'+1}\leq{i}\leq{2m'-h'-1}).\\
\end{cases}
$$
It follows that the degree of $L_{<0}(T) \in \Q_p[T]$ is $h'$ and the inverse $\gamma$ of every root of $L_{<0}(T)$ satisfies $\nu_p(\gamma)=-a/h'.$ Hence $L(T)$ satisfies the third condition of Condition \ref{cond} for $q=p^a$ and $h=h'$. Since $a$ is coprime to $h'$, it follows that $L_{<0}(T)$ is irreducible in $\Q_p[T]$.

Assume that $L(T)$ is not irreducible in $\Q[T]$. Since $L_{<0}(T)$ is irreducible in $\Q_p[T]$, $L(T)$ is divisible by two different irreducible monic polynomials $L_1(T), L_2(T) \in \Q[T]$. Since they do not have common roots and $L_{<0}(T)$ is irreducible in $\Q_p[T]$, we may assume that every root $\gamma$ of $L_1(T)$ satisfies $\nu_p(\gamma)=0$. Since $L(T) \in \Z_{\ell}[T]$
for all prime numbers ${\ell}\neq{p}$, it follows that $L_1(T) \in \Z[T]$. Since every root of $L(T)$ has complex absolute value one, it follows that $L_1(T)$ is a cyclotomic polynomial by Kronecker's theorem. This contradicts to our construction of $L(T)$. Hence $L(T)$ is irreducible in $\Q[T]$. 
\end{proof}

\begin{lem}\label{poly2}
Let $p$ be a prime number, and $h'$ an integer with $1\leq h' \leq 10$. 
Then there exists a polynomial $$L(T)=\prod^{20}_{i=1}(1-{\gamma_i}T) \in 1+T\Q[T]$$ of degree $20$ such that Condition \ref{cond} is satisfied for a power $q$ of $p$, $h=h'$, and $m=10$, and at least one of the following conditions is satisfied:
\begin{itemize}
\item $\disc(\Q(\gamma_1))$ is a square.
\item The integer $e$ of Condition \ref{cond} is even.
\end{itemize}
\end{lem}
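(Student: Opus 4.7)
The plan is to split into two cases according to the parity of $h'$. \textbf{Case 1 ($h'$ even).} I apply Lemma \ref{poly} with $m' = 5$ and $h$-parameter equal to $h'/2$ (valid since $1 \leq h'/2 \leq 5$) to obtain a polynomial $Q(T) \in 1 + T\Q[T]$ of degree $10$ satisfying Condition \ref{cond} with $m = 5$, $h = h'/2$, and some power $q = p^a$ of $p$. Setting $L(T) := Q(T)^2$, I claim it satisfies Condition \ref{cond} with $m = 10$, $h = h'$, and the power $q' := q^2 = p^{2a}$: the conditions on roots (lying on the unit circle, not roots of unity) and $\Z_\ell$-integrality for $\ell \neq p$ are inherited from $Q$; the Newton polygon of $L$ at $q'$ is obtained by halving slopes and doubling multiplicities relative to that of $Q$ at $q$, giving exactly $h'$ roots of $\nu_{q'}$-valuation $-1/h'$; $L = Q^e$ with $Q$ irreducible and $e = 2$; and $Q_{<0}$ is irreducible in $\Q_p[T]$ by Lemma \ref{poly}. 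Since $e = 2$ is even, the second alternative of the lemma is satisfied.

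\textbf{Case 2 ($h'$ odd).} Because $e$ divides both $h'$ (from the Newton polygon, since each slope of $L$ has multiplicity divisible by $e$) and $\deg L = 20$, and $\gcd(h', 20) \in \{1,5\}$ is odd when $h'$ is odd, $e$ is necessarily odd; hence I must arrange $\disc(\Q(\gamma_1)) \in \Q^{\ast 2}$. The structural device is to construct an irreducible $L$ of degree $20$ such that $E := \Q(\gamma_1)$ contains a $CM$ subfield $E'$ of degree $10$, so that Lemma \ref{disc} applied to the degree-$2$ extension $E/E'$ yields $\disc(E) = \disc(E')^2 \in \Q^{\ast 2}$. Concretely, I first apply Lemma \ref{poly} with $m' = 5$ to produce a $CM$ field $E' = \Q(\gamma')$ of degree $10$ with maximal totally real subfield $E'_0$ and $E' = E'_0(\sqrt{-d})$ for some totally positive $d \in E'_0$. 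I then choose a totally positive $\delta \in E'_0$ with $\sqrt\delta \notin E'$, set $E_0 := E'_0(\sqrt\delta)$ (totally real, degree $10$) and $E := E_0(\sqrt{-d}) = E'(\sqrt\delta)$ ($CM$, degree $20$), and construct an element $\alpha \in E_0 \setminus E'_0$ generating $E_0$ over $\Q$, with all real conjugates in $(-2, 2) \setminus S'$, and satisfying $\alpha^2 - 4 \equiv -d \pmod{E_0^{\ast 2}}$ (so that $\gamma := (\alpha + \sqrt{\alpha^2 - 4})/2$ lies in $E$ and generates it over $\Q$), chosen so that the Newton polygon of $\gamma$ at $p$ realizes $h = h'$. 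The desired $L$ is then the minimal polynomial of $\gamma$ over $\Q$.

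The hard part is the construction of $\alpha$ in Case 2: simultaneously enforcing the archimedean condition on the conjugates of $\alpha$, the squareness relation $\alpha^2 - 4 \equiv -d \pmod{E_0^{\ast 2}}$, and the prescribed Newton polygon of $\gamma$ at $p$. The required flexibility is analogous to the $p$-adic perturbation trick in Lemma \ref{poly} (adding a term $p^{-a} T^{m' - h'}$ of large negative valuation), now performed in the number field $E_0$ instead of $\Q$: start from a base element $\alpha_0 \in E_0$ built in parallel to Lemma \ref{poly}'s \emph{$F_0$ part} and perturb it $p$-adically while preserving the class of $\alpha^2 - 4$ modulo $-d \cdot E_0^{\ast 2}$. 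Existence should combine a Chebotarev-type argument controlling the splitting of $p$ along the tower $\Q \subset E'_0 \subset E_0 \subset E$ (in the spirit of Lemmas \ref{Chebotarev} and \ref{progression}) with weak approximation at the finite places of $E_0$ above $p$ and archimedean approximation controlling the conjugates of $\alpha$.
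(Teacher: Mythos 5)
Your Case 1 ($h'$ even) is correct and is exactly the paper's argument: square the degree-$10$ polynomial from Lemma \ref{poly} and use $e=2$. Your observation that for odd $h'$ one necessarily has $e\mid\gcd(h',20)$ odd, so that the discriminant must be forced to be a square, is also correct and is a good way to see why the two alternatives in the statement are organized as they are.

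Case 2, however, contains a genuine gap: the entire content of the odd case is the existence of the element you call $\alpha$ (equivalently $\gamma$), and you do not construct it. You are asking for a single $\alpha\in E_0$ that simultaneously (i) generates $E_0$ over $\Q$ with all conjugates in $(-2,2)\setminus S'$, (ii) satisfies the square-class constraint $\alpha^2-4\equiv -d \pmod{E_0^{*2}}$ so that $\gamma$ lands in the prescribed field $E$, and (iii) gives $\gamma$ a prescribed Newton polygon at $p$ with the negative-slope part supported at a single place of local degree $h'$ (needed for $Q_{<0}$ to be irreducible over $\Q_p$). Constraint (ii) is the problematic one: by Hilbert 90 the elements $\gamma$ of norm $1$ in $E/E_0$ are exactly those of the form $u/i(u)$, so satisfying (ii) while controlling (iii) amounts to controlling the divisor of $u$ — which is not a weak-approximation statement (weak approximation at finitely many places does not let you prescribe a global square class together with a global divisor). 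The phrase ``existence should combine a Chebotarev-type argument with weak approximation'' is where the proof actually has to happen, and as written it does not.

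For comparison, the paper avoids all three difficulties at once by working directly with $\beta=\alpha/i(\alpha)$ for $\alpha\in M^*$ in a CM field $M=M_0(\sqrt{-n})$, $n\in\Z_{>0}$, $[M_0:\Q]=10$: such $\beta$ automatically has absolute value $1$ at every archimedean place (no archimedean approximation needed); choosing $M_0$ with a place $v$ over $p$ of ramification index $h'$ and residue degree $1$, splitting in $M$, and taking $\alpha$ with divisor supported at a single place $v_1$ over $v$ (possible by finiteness of the class number) produces the required Newton polygon with $Q_{<0}$ irreducible; and $\disc(M)\equiv N_{M_0/\Q}(-n)=(-n)^{10}$ is a square with no extra subfield condition. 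The remaining issue — guaranteeing $\Q(\beta)=M$ — is handled by a Drinfeld-type argument using $\gcd(h',20)=1$, which is why the paper must treat $h'=5$ separately (there it takes the fifth power of a degree-$4$ polynomial with square discriminant, accepting $e=5$ odd). Your plan, if completed via Hilbert 90, would essentially collapse to this argument, but it would still need a separate device for $h'=5$ or a proof that your $\alpha$ generates $E_0$ despite the extra constraints; neither is supplied.
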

\begin{proof}
First, we assume $h'$ is even. We write $h'=2h''$. By Lemma \ref{poly}, there exists a polynomial $L(T) \in 1+T\Q[T]$ of degree $10$ satisfying Condition \ref{cond} for a power $q$ of $p$, $h=h''$, and $m=5$. Then the square $L(T)^2$ satisfies our conditions for $q^2$, $h=h'=2h''$, and $m=10$.

Next, we assume $h'=1, 3, 7$, or $9$. We take a totally real field $M_0$ with $[M_0:\Q]=10$ such that $M_0$ has a finite place $v$ above $p$ with ramification index $e_v=h'$ and residue degree $f_v=1$. For example, we can take $M_0:=\Q[T]/(F(T))$ for the polynomial $F(T)$ constructed in the proof of Lemma \ref{poly}. Choose a positive integer $n\geq1$ such that $v$ splits into the $CM$ field $M:=M_0(\sqrt{-n})$. As in the proof of Lemma \ref{disc}, we have 
$$\disc(M)={\rm{N}}_{M_0/\Q}(-n)=(-n)^{10} \in {{{\Q}^*}/{({{\Q}^*}^2})}.$$ 
Hence $\disc(M)$ is a square. Let $v_1, v_2$ be the finite places of $M$ above $v$. Since the class number of $M$ is finite, there exists an element $\alpha \in M^*$ with $v_1(\alpha)>0$ and $v'(\alpha)=0$ for every finite place $v' \neq v_1$ of $M$. Let $i$ be the nontrivial element of ${\rm{Gal}}(M/M_0)$. Let $\beta:=\alpha/i(\alpha)$. We shall show $\Q(\beta)=M$ by a similar argument as in the proof of \cite[Lemma D.3]{Drinfeld}. Let $u$ be a place of $\Q(\beta)$ under $v_1$. From our construction, $v_1$ is the unique place above $u$. Since $e_{v_1}=h'$ and $f_{v_1}=1$, the extension degree $[M:\Q(\beta)]$ divides $h'$. Since $h'$ is coprime to $[M:\Q]=20$, we have $[M:\Q(\beta)]=1$. Hence the minimal polynomial $L(T)$ of $\beta$ over $\Q$ has degree $20$, and it satisfies our conditions for a power $q$ of $p$, $h=h'$, and $m=10$.

Finally, we assume $h'=5$. By the above discussion, we can construct a polynomial $$L(T)=\prod^{4}_{i=1}(1-{\gamma_i}T) \in 1+T\Q[T]$$ of degree $4$ satisfying Condition \ref{cond} for a power $q$ of $p$, $h=1$, and $m=2$ such that $\disc(\Q(\gamma_1))$ is a square. Then the fifth power $L(T)^5$ satisfies our conditions for $q^5$, $h=5$, and $m=10$.
      
\end{proof}
\begin{rem}\label{counter}
\rm{For $p=5$, there exists a polynomial 
$$L(T)=\prod^{20}_{i=1}(1-{\gamma_i}T) \in 1+T\Q[T]$$
 of degree $20$ satisfying Condition \ref{cond} for a power $q$ of $5$, $h=1$, and $m=10$ such that $\disc(\Q(\gamma_1))$ is not  a square. Then, as explained in Remark \ref{can not weaken}, we cannot show the assertion of Theorem \ref{uncondition} for $L(T)$ by our methods. For example, such a polynomial $L(T)$ can be constructed as follows. Let $M_0$ be a totally real field with $[M_0:\Q]=10$ such that $5$ splits completely in $M_0$. Let $v_1,\dotsc, v_{10}$ be the places of $M_0$ above $5$. We take a $CM$ field $M/M_0$ such that $[M:M_0]=2$, $v_1$ splits completely in $M$, $v_2$ is ramified in $M$, $v_i$ is unramified in $M$ for $3\leq i \leq 10$. Then $\disc(M)$ is not a square since $\nu_5(\disc(M))=1$. Let $v'_1, v'_2$ be the finite places of $M$ above $v_1$. Take an element $\alpha \in M^*$ with $v'_1(\alpha)>0$ and $v'(\alpha)=0$ for every finite place $v' \neq v'_1$ of $M$. We put $\beta:=\alpha/i(\alpha)$, where $i$ is the nontrivial element of ${\rm{Gal}}(M/M_0)$. Then, by the same argument as in the proof of Lemma \ref{poly2}, we have $\Q(\beta)=M$, and the minimal polynomial of $\beta$ over $\Q$ satisfies the above conditions.}
\end{rem}

Recall that the Picard number $\rho(X)$ of a $K3$ surface $X$ over $\overline{\F}_p$ is a positive even integer by the Tate conjecture \cite{Madapusi}, \cite{Maulik}, \cite{Charles13}, \cite{Kim-Madapusi}; see \cite[Chapter 17, Corollary 2.9]{Huybrechts}. 
\begin{thm}\label{existence}
Let $p$ be a prime number with $p\geq5$. Let $\rho \in (2\Z)_{>0}$ be a positive even integer and $h'\in\Z_{>0}$ a positive integer. 
Then the following assertions are equivalent:
\begin{enumerate}
	\item $\rho$ and $h'$ satisfy $\rho \leq 22-2h'.$
	\item There exists a $K3$ surface $X$ over $\overline{\F}_p$ such that $\rho(X)=\rho$ and $h(X)=h'$.
\end{enumerate} 
\end{thm}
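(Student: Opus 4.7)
The plan is to establish (2)$\Rightarrow$(1) via Artin's inequality recalled at the start of the section, and (1)$\Rightarrow$(2) by reducing to Theorem \ref{uncondition} applied to a carefully chosen polynomial $L(T)$, whose existence is provided by Lemma \ref{poly} and Lemma \ref{poly2}. The output of Theorem \ref{uncondition}, combined with Remark \ref{Tate}, produces a $K3$ surface with the prescribed geometric Picard number and height.

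More concretely, assume (1) and set $m := (22-\rho)/2$. Since $\rho$ is a positive even integer and $\rho \leq 22 - 2h'$ with $h' \geq 1$, we have $1 \leq h' \leq m \leq 10$. My next step is to produce a polynomial
\[
L(T) = \prod_{i=1}^{2m}(1-\gamma_i T) \in 1 + T\Q[T]
\]
of degree $2m$ satisfying Condition \ref{cond} for a suitable power $q$ of $p$ and for this value of $h = h'$. If $m \leq 9$, I invoke Lemma \ref{poly} directly. If $m = 10$ (so $\rho = 2$), I must additionally ensure that Theorem \ref{uncondition} applies in characteristic $5$; here I use Lemma \ref{poly2}, which arranges that $\disc(\Q(\gamma_1))$ is a square or that the exponent $e$ of Condition \ref{cond} is even. (For $p \geq 7$ no such refinement is needed, and Lemma \ref{poly} alone suffices.)

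Having secured such an $L(T)$, I apply Theorem \ref{uncondition} to obtain a positive integer $n$ and a $K3$ surface $X$ over $\F_{q^n}$ with transcendental $L$-function $\prod_{i=1}^{2m}(1-\gamma_i^n T)$. By Remark \ref{Tate}, the $K3$ surface $X_{\overline{\F}_p}$ satisfies $h(X_{\overline{\F}_p}) = h = h'$ (since that integer is read off from the Newton-slope structure of $L(T)$) and $\rho(X_{\overline{\F}_p}) = 22 - 2m = \rho$. This yields the desired $K3$ surface over $\overline{\F}_p$ after base change.

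The substantive step in this plan is arranging Condition \ref{cond} with the right height slope data and, for the borderline case $m=10$, $p=5$, with the extra discriminant or parity condition; but both issues are already packaged into Lemmas \ref{poly} and \ref{poly2}, so once those are in hand the theorem follows formally from Theorem \ref{uncondition} and Remark \ref{Tate}. No genuinely new obstacle arises beyond checking that $m=(22-\rho)/2$ lies in the allowed range $1\leq m \leq 10$, which is immediate from the numerical hypothesis (1).
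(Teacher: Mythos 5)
Your proposal is correct and follows essentially the same route as the paper: Artin's inequality for (2)$\Rightarrow$(1), and for (1)$\Rightarrow$(2) the choice $m=11-\rho/2$, Lemma \ref{poly} when $\rho\geq 4$, Lemma \ref{poly2} when $\rho=2$, then Theorem \ref{uncondition} and Remark \ref{Tate}. The only (harmless) difference is your aside that for $p\geq 7$ Lemma \ref{poly} alone would suffice even when $m=10$.
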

\begin{proof}
(2) $\Rightarrow$ (1): This follows from \cite[Theorem 0.1]{Artin74}.

(1) $\Rightarrow$ (2):
We put $m:=11-\rho/2$. When $\rho \geq 4$, by Lemma \ref{poly}, there exists a polynomial
$$L(T)=\prod^{2m}_{i=1}(1-{\gamma_i}T)\in{1+T\Q[T]},$$
satisfying Condition \ref{cond} for a power $q$ of $p$ and $h=h'$. When $\rho=2$, by Lemma \ref{poly2}, we may assume $\disc(\Q(\gamma_1))$ is a square or $e$ is even.

By Theorem \ref{uncondition}, there exist a positive integer $n\geq1$ and a $K3$ surface $X_0$ over $\F_{q^n}$ such that 
$$L_{\mathrm{{trc}}}(X_0/{\F_{q^n}}, T)=\prod^{2m}_{i=1}(1-{\gamma^n_i}T).$$
The $K3$ surface $X:={X_0}\otimes_{\F_{q^n}}{\overline{\F}_p}$ satisfies $\rho(X)=\rho$ and $h(X)=h'$ as required; see Remark \ref{Tate}.
\end{proof}

\subsection*{Acknowledgements}
The author is deeply grateful to my advisor, Tetsushi Ito, for his kindness, support, and advice. He gave me a lot of invaluable suggestions. The author also would like to thank Yuya Matsumoto and Seidai Yasuda for helpful suggestions and comments.
Moreover the author would like to thank the anonymous referees for sincere remarks and comments.
The work of the author was supported by JSPS Research Fellowships for Young Scientists KAKENHI Grant Number 18J22191.


\begin{thebibliography}{99} 
\bibitem{Artin74}
  Artin, M., \textit{Supersingular K3 surfaces},
  Ann.\ Sci.\ \'Ecole Norm.\ Sup.\ (4) \textbf{7} (1974), 543-567.
  \bibitem{BHPV}
  Barth, W. P., Hulek, K., Peters, C. A. M., Van de Ven, A.,
  \textit{Compact complex surfaces}, Second edition.\ Ergebnisse der Mathematik und ihrer Grenzgebiete.\ 3.\ Folge.\ A Series of Modern Surveys in Mathematics, 4.\ Springer-Verlag, Berlin, 2004.
  
  \bibitem{Bayer}
  Bayer-Fluckiger, E., \textit{Embeddings of maximal tori in orthogonal groups}, Ann.\ Inst.\ Fourier (Grenoble) \textbf{64} (2014), no.\ 1, 113-125.
\bibitem{Charles13}
Charles, F., \textit{The Tate conjecture for K3 surfaces over finite fields}, Invent.\ Math.\ \textbf{194} (2013), no.\ 1, 119-145.
\bibitem{Chen}
Chen, M., \textit{Complex Multiplication, Rationality and Mirror Symmetry for Abelian Varieties and $K3$ Surfaces}, Ph.D\ Thesis, Universit\"at Bonn, 2007. \\
\texttt{http://hss.ulb.uni-bonn.de/2007/1029/1029.htm}


\bibitem{Drinfeld} 
 Drinfeld, V., \textit{On a conjecture of Deligne}, Mosc.\ Math.\ J.\ 12 (2012), no.\ 3, 515-542, 668.
 
   
 \bibitem{Huybrechts}
 Huybrechts, D., \textit{Lectures on K3 Surfaces}, Cambridge Studies in Advanced Mathematics \textbf{158} (2016).

 \bibitem{Kawamata}
 Kawamata, Y., \textit{Semistable minimal models of threefolds in positive or mixed characteristic},
J.\ Algebraic Geom.\ \textbf{3} (1994), no.\ 3, 463-491.
\bibitem{Kawamata2}
Kawamata, Y., \textit{Index 1 covers of log terminal surface singularities},
J.\ Algebraic Geom.\ \textbf{8} (1999), no.\ 3, 519-527.
\bibitem{Kim-Madapusi}
  Kim, W., Madapusi Pera, K., \textit{2-adic integral canonical models}, Forum Math.\ Sigma \textbf{4} (2016), e28, 34 pp.
  
 \bibitem{Liedtke-Matsumoto}
  Liedtke, C., Matsumoto, Y., \textit{Good Reduction of K3 Surfaces}, Compos.\ Math.\ 154 (2018), no.\ 1, 1-35.
  
 
  \bibitem{Madapusi}
  Madapusi Pera, K.,
  \textit{The Tate conjecture for K3 surfaces in odd characteristic},
  Invent.\ Math.\ \textbf{201} (2015), no.\ 2, 625-668.
\bibitem{Matsumoto}
  Matsumoto, Y., \textit{Good reduction criterion for K3 surfaces}, Math.\ Z.\ \textbf{279} (2015), no.\ 1-2, 241-266.
\bibitem{Maulik}
  Maulik, D., \textit{Supersingular K3 surfaces for large primes},
  With an appendix by Andrew Snowden, Duke Math.\ J.\ \textbf{163} (2014), no.\ 13, 2357-2425. 
\bibitem{Nikulin}
  Nikulin, V., \textit{Integer symmetric bilinear forms and some of their geometric applications}, Math USSR-Izv.\ \textbf{14} (1979), no.\ 1, 103-167 (1980).
  \bibitem{Nygaard-Ogus}
  Nygaard, N., Ogus, A., \textit{Tate's conjecture for K3 surfaces of finite height}, Ann.\ of Math.\ (2) \textbf{122} (1985), no.\ 3, 461-507.
 \bibitem{Shafarevich2}
Pjatecki{\u\i}-{\v{S}}apiro, I.\ I., {\v{S}}afarevi{\v{c}}, I.\ R., \textit{The arithmetic of surfaces of type K3}, Proceedings of the International Conference on Number Theory (Moscow, 1971).\ Trudy Mat.\ Inst.\ Steklov.\ \textbf{132} (1973), 44-54, 264.

 
 \bibitem{Rizov10}
 Rizov, J., \textit{Kuga-Satake abelian varieties of K3 surfaces in mixed characteristic}, J.\ Reine Angew.\ Math.\ \textbf{648} (2010), 13-67.

 \bibitem{Saito}
 Saito, T., \textit{Log smooth extension of a family of curves and semi-stable reduction}, J.\ Algebraic Geom.\ \textbf{13} (2004), no.\ 2, 287-321. 
 \bibitem{Serre}
Serre, J.-P., \textit{A course in arithmetic}, Translated from the French.\ Graduate Texts in Mathematics, No.\ 7.\ Springer-Verlag, New York-Heidelberg, 1973.
\bibitem{Serre2}
Serre, J.-P., \textit{Local fields}, Translated from the French by Marvin Jay Greenberg. Graduate Texts in Mathematics, 67.\ Springer-Verlag, New York-Berlin, 1979.
 \bibitem{Taelman}
  Taelman, L.,
  \textit{K3 surfaces over finite fields with given $L$-function},
  Algebra Number Theory \textbf{10} (2016), no.\ 5, 1133-1146.
  
  \bibitem{Todorov}
   Todorov, A. N., \textit{Applications of the K$\overset{..}{a}$hler-Einstein-Calabi-Yau metric to moduli of K3 surfaces}, Invent.\ Math.\ 61 (1980), no.\ 3, 251-265.
  

 \bibitem{Zarhin}
 Zarhin, Yu., \textit{Hodge groups of K3 surfaces}, J.\ Reine Angew.\ Math.\ \textbf{341} (1983), 193-220.
\end{thebibliography}
\end{document}